\theoremstyle{plain}
\newtheorem{theorem}{Theorem}[section]
\newtheorem{lemma}[theorem]{Lemma}
\newtheorem{proposition}[theorem]{Proposition}
\theoremstyle{definition}
\theoremstyle{remark}
\newcommand{\N}{\mathbb{N}}
\newcommand{\R}{\mathbb{R}}
\newcommand{\C}{\mathbb{C}}
\newcommand{\mn}{\mathbb{N}}
\newcommand{\ind}[1]{\1_{\left\{#1\right\}}}
\numberwithin{equation}{section}
\DeclareMathOperator{\E}{\mathbb{E}}
\renewcommand{\P}{\mathbb{P}}
\newcommand{\calF}{\mathcal{F}}
\newcommand{\dd}{\mathrm{d}}
\newcommand{\mr}{\mathbb{R}}
\renewcommand{\bar}[1]{\overline{#1}}
\renewcommand{\tilde}[1]{\widetilde{#1}}
\renewcommand{\hat}[1]{\widehat{#1}}
\renewcommand{\rho}{\varrho}
\renewcommand{\epsilon}{\varepsilon}
\DeclareMathOperator{\1}{\mathbbm{1}}
\newcommand{\x}{\mathbf{x}}
\title{A result on power moments of Lévy-type perpetuities and its application to the $L_p$-convergence of Biggins' martingales in
branching Lévy processes}
\author{Alexander Iksanov\footnote{Faculty of Computer Science and Cybernetics, Taras Shevchenko National University of Kyiv, Kyiv,
Ukraine; e-mail: iksan@univ.kiev.ua} \ and \ Bastien
Mallein\footnote{LAGA - Institut Galilée, Université
Paris, France; e-mail: mallein@math.univ-paris13}}
\date{\today}
\begin{document}

\maketitle

\begin{abstract}
Lévy-type perpetuities being the a.s.\@ limits of
particular generalized Ornstein-Uh\-len\-beck processes are a natural
continuous-time generalization of discrete-time perpetuities.
These are random variables of the form
$S:=\int_{[0,\infty)}e^{-X_{s-}}{\rm d}Z_s$, where $(X,Z)$ is a
two-dimensional Lévy process, and $Z$ is a drift-free Lévy
process of bounded variation. We prove an ultimate criterion for
the finiteness of power moments of $S$. This result and the
previously known assertion due to Erickson and Maller
\cite{Erickson+Maller:2005} concerning the a.s.\@ finiteness of $S$
are then used to derive ultimate necessary and sufficient
conditions for the $L_p$-convergence for $p>1$ and $p=1$,
respectively, of Biggins' martingales associated to branching
Lévy processes. In particular, we provide final versions of
results obtained recently by Bertoin and Mallein in
\cite{Bertoin+Mallein:2018b}.
\end{abstract}

\noindent \textbf{Keywords:} Biggins' martingale; branching Lévy
process; Lévy-type perpetuity; $L_p$-con\-ver\-gen\-ce; spinal
decomposition

\noindent \textbf{MSC 2000:} Primary: 60G44, 60J80. Secondary: 60G51.

\section{Introduction}

Let $(M_k, Q_k)_{k\in\mn}$ be a sequence of independent copies of
an $\R^2$-valued random vector $(M,Q)$ with arbitrary dependence
of components. Further, denote by $(\Pi_n)_{n\in\mn_0}$ the
multiplicative (ordinary) random walk with factors $M_n$ for
$n\in\mn$ which starts at $1$, that is, $\Pi_0:= 0$ and
$\Pi_n:=\prod_{i=1}^n M_i$, $n \in \mn$. Then define its perturbed
variant $(\Theta_n)_{n\in\mn}$, that may be called a {\it
perturbed multiplicative random walk}, by
\begin{equation} \label{eq:MPRW}
\Theta_n :=\Pi_{n-1}Q_n,    \quad   n \in \mn.
\end{equation}
When $M_k$ and $Q_k$ are a.s.\@ positive, the random sequence
$(\log \Theta_n)_{n\in\mn}$ is known in the literature as a {\it
perturbed (additive) random walk}. A major part of the recent book
\cite{Iksanov:2016} is concerned with the so defined perturbed
random walks, both multiplicative and additive. We refer to the
cited book for numerous applications of these random sequences
and to \cite{Alsmeyer+Buraczewski+Iksanov:2017,
Buraczewski+Dyszewski+Iksanov+Marynych:2018,
Damek+Kolodziejek:2018, Iksanov+Jedidi+Bouzzefour:2018,
Iksanov+Pilipenko+Samoilenko:2017} for more recent contributions.

Recall that, provided that the series $\sum_{k\ge 1}\Theta_k$
converges a.s., its sum
\begin{equation*}\label{per}
Z:= \sum_{k\ge 1}\Theta_k
\end{equation*}
is called {\it perpetuity}. The term stems from the fact that such
random series may be used in insurance mathematics and financial
mathematics to model sums of discounted payment streams. The state
of the art concerning various aspects of perpetuities is discussed
in \cite{Buraczewski+Damek+Mikosch:2016} and \cite{Iksanov:2016}.
We think that the most valuable feature of the
perturbed multiplicative random walks is their link with
perpetuities.

There is also an unexpected connection, unveiled
in \cite{Lyons:1997} and detailed in \cite{Iksanov:2004} and
\cite{Alsmeyer+Iksanov:2009}, between perpetuities and branching
random walks. The connection, which is not immediately seen, emerges
when studying the weighted random tree associated with the
branching random walk under a size-biased measure. In particular,
criteria for the uniform integrability and the $L_p$-convergence
for $p>1$ of the Biggins martingale (also known as the additive
martingale or the intrinsic martingale in the branching random
walk) are closely linked with criteria for the a.s.\@ finiteness
and the existence of the $p$th moment of perpetuities,
respectively. In this way one arrives at a final version of the
famous Biggins martingale convergence theorem which was originally
proved by Biggins himself in \cite{Biggins:1977} with the help of
a different argument and under additional moment assumptions. The
recent article \cite{Bertoin+Mallein:2018} is aimed at obtaining
sufficient conditions for the uniform integrability and the
$L_p$-convergence for $p\in (1,2]$ of the Biggins martingale in a
{\it branching Lévy process}. To this end, a connection
similar to that described at the beginning of the paragraph is
exploited between certain continuous-time perpetuities and
branching Lévy process. The conditions obtained in
\cite{Bertoin+Mallein:2018} are not optimal.

In this article we first define {\it perturbed multiplicative
Lévy processes} which are natural continuous-time counterparts
of the perturbed multiplicative random walks. These are then used
to construct {\it Lévy-type perpetuities} in the same way as
the perturbed multiplicative random walks are used to construct
the discrete-type perpetuities. The Lévy-type perpetuities are
a particular instance of the limit random variables for
generalized Ornstein-Uhlenbeck processes. This restriction (that
is, that we consider the particular rather than any limit) is
motivated by a prospective application, see the end of this
section for more details. Necessary and sufficient conditions for
the a.s.\@ finiteness of the Lévy-type perpetuities can be
derived from \cite[Theorem
2]{Erickson+Maller:2005}.

Our main contribution is two-fold. First, we prove an ultimate criterion for the finiteness of the
$p$th moment of the Lévy-type perpetuity for all $p>0$.
Second, we apply this criterion and the aforementioned result from
\cite{Erickson+Maller:2005} to derive necessary and sufficient
conditions for the a.s.\@ and the $L_p$-convergence for $p\geq 1$
of the Biggins martingale in the branching Lévy process. Thus,
we obtain final versions of Theorem 1.1 and Proposition 1.4 in
\cite{Bertoin+Mallein:2018} which was our primary motivation.

\section{Lévy-type perpetuities}\label{defi}

In this section we first define a continuous-time counterpart of
the perturbed multiplicative random walks, described in \eqref{eq:MPRW}.

Let $\Lambda$ be a sigma-finite measure on $\R \times \R$ with
$\Lambda(\{0,0\}) = 0$. Define the projections $\Lambda_1$ and
$\Lambda_2$ of $\Lambda$ by
\[\Lambda_1(B):=\int_\mr \Lambda(B,{\rm d}y)\quad\text{and}\quad \Lambda_2(B):=\int_\mr \Lambda({\rm d}x, B)\]
for Borel sets $B$ in $\mr\backslash\{0\}$. Throughout the article
our standing assumption is that
\begin{equation}\label{assump}
\int_\R (x^2 \wedge 1) \Lambda_1({\rm d}x) < \infty \quad
\text{and} \quad \int_\R (|y|\wedge 1) \Lambda_2({\rm d}y) <
\infty.
\end{equation}
Denote by $N :=\sum_k \varepsilon_{(\tau_k, (i_k, j_k))}$ a
Poisson random measure on $\mr_+\times \mr^2$ with mean measure
${\rm LEB}\otimes \Lambda$, where $\mr_+:=[0,\infty)$,
$\varepsilon_{(t,(x,y))}$ denotes the Dirac mass
at $(t,(x,y))\subset\mr_+\times \mr^2$, and ${\rm LEB}$ is the
Lebesgue measure on $\mr_+$. Define $N_1:=\sum_k
\varepsilon_{(\tau_k, i_k)}$ and $N_2:=\sum_k
\varepsilon_{(\tau_k, j_k)}$, the projections of $N$. These are
Poisson random measures on $\mr_+\times \mr$ with mean measures
${\rm LEB}\otimes \Lambda_j$, $j=1,2$.

For $t\geq 0$, set
\begin{align}\label{levy}
X_t &:=vB_t+bt+ \int_{[0,\,t] \times \R} x\1_{[-1,1]}(x) N_1^c(\dd s \dd x) + \int_{[0,\,t] \times\R} x\1_{\R\backslash [-1,1]}(x) N_1(\dd s \dd x)\\
Z_t &:= \int_{[0,\,t]\times \R} y N_2(\dd s \dd y)\notag
\end{align}
where $v^2\geq 0$, $b\in\mr$ and $(B_t)_{t\geq 0}$ is a Brownian
motion independent of $N$. The first integral in \eqref{levy} is a
compensated Poisson integral (hence, the notation
$N_1^c$) which can be defined as the following
limit in $L_2$
\[\lim_{\delta\downarrow0} \int_{[0,\,t] \times \R} x\1_{(\delta, 1]}(|x|)
N_1(\dd s \dd x) - t\int_{\delta<|x|\leq
1}x\Lambda_1({\rm d}x).\]

In view of the second assumption in
\eqref{assump} the process $Z:=(Z_t)_{t\geq 0}$ is a drift-free
Lévy process of bounded variation. In particular, $Z$ can be
represented as the difference of two independent subordinators.
The random measure $N$ is the measure of jumps of the
two-dimensional Lévy process $(X_t, Z_t)_{t\geq 0}$.

Define the random process $Y:=(Y_t)_{t\geq 0}$ by
\[Y_t = \begin{cases}
    y, &\text{ if } N_2(\{t\}\times\{y\}) = 1;\\
    0, &\text{ if } N_2(\{t\}\times \R) = 0,
  \end{cases}
\]
that is, $Y=(Z_t-Z_{t-})_{t\geq 0}$ is the process of jumps of $Z$.
The process $(Y_te^{-X_{t-}})_{t\geq 0}$ which is a natural
continuous-time generalization of the discrete-time process
$(\Theta_n)_{n\in\mn}$ defined in~\eqref{eq:MPRW} will be called
{\it perturbed multiplicative Lévy process}. For $t\geq 0$,
set
\begin{equation}\label{st}
S_t: = \sum_{0\leq s\leq t}e^{-X_{s-}}Y_s=\sum_{\tau_k\leq t}
e^{-X_{\tau_k-}}j_k=\int_{[0,\,t]}e^{-X_{s-}}{\rm d}Z_s.
\end{equation}
Whenever the a.s.\@ limit $S:=\lim_{t\to\infty}S_t$ exists and is
finite, we call the random variable
\begin{equation}\label{perp}
S = \sum_{s \geq 0} e^{-X_{s-}}Y_s=\sum_k
e^{-X_{\tau_k-}}j_k=\int_{\mr_+}e^{-X_{s-}}{\rm d}Z_s
\end{equation}
{\it Lévy-type perpetuity}.

The following result which gives necessary and sufficient
conditions for the a.s.\@ finiteness of Lévy-type perpetuities
is a specialization\footnote{In the cited result $Z$ is allowed to
be an arbitrary Lévy process. The random process $(S_t)_{t\geq
0}$ in \eqref{st} is then called a generalized Ornstein-Uhlenbeck
process. In view of the second condition in \eqref{assump} which
is motivated by a forthcoming application of our results to
branching Lévy processes we only consider a subclass of
generalized Ornstein-Uhlenbeck processes.} of Theorem 2 in
\cite{Erickson+Maller:2005}. For $x\geq 1$, set
\[A(x):=1+\int_1^x
\Lambda_1((y,\infty)){\rm d}y=1+\int_\R (x \wedge z-1)_+
\Lambda_1(\dd z),\]
where $z_+ = \max(z,0)$ and
$y \wedge z = \min(y,z)$ for all $y,z \in \R$.
\begin{proposition}\label{maller}
Assume that
\begin{equation}\label{ex}
\lim_{t\to\infty}X_t=+\infty\quad\text{{\rm
a.s.}}\quad\text{and}\quad \int_{\R\backslash[-e, e]}\frac{\log
|y|}{A(\log |y|)}\Lambda_2({\rm d}y)<\infty.
\end{equation}
Then
\begin{equation}\label{exist}
\P\{\lim_{t\to\infty}S_t~\text{exists and is finite}\}=1.
\end{equation}
Conversely, if \eqref{ex} fails, then \eqref{exist} fails.
\end{proposition}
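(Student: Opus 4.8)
The plan is to reduce Proposition~\ref{maller} to Theorem~2 of \cite{Erickson+Maller:2005} by verifying that our setting is a genuine special case of theirs and that the hypotheses match up. In the Erickson--Maller framework one studies the generalized Ornstein--Uhlenbeck process driven by a bivariate Lévy process $(\xi,\eta)$, and the criterion for a.s.\ convergence of $\int_{[0,t]}e^{-\xi_{s-}}\dd\eta_s$ is stated in terms of $\xi$ drifting to $+\infty$ together with an integrability condition on the Lévy measure of $\eta$ relative to a function built from the Lévy measure of $\xi$. Here $\xi=X$ and $\eta=Z$, with $Z$ drift-free of bounded variation by the second part of \eqref{assump}. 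First I would recall the precise form of the Erickson--Maller integral test: their condition involves $\int \frac{\log^+|y|}{A_{\xi}(\log^+|y|)}\,\Pi_\eta(\dd y)<\infty$ where $A_\xi(x)=1+\int_1^x \overline{\Pi}^+_\xi(u)\,\dd u$ and $\overline{\Pi}^+_\xi(u)=\Pi_\xi((u,\infty))$ is the upper tail of the Lévy measure of $\xi$. I would then observe that the Lévy measure of $X$ is exactly $\Lambda_1$ and the Lévy measure of $Z$ is exactly $\Lambda_2$ (this is immediate from \eqref{levy} and the fact that $N_1,N_2$ are Poisson with mean measures $\mathrm{LEB}\otimes\Lambda_j$), so that $A(x)$ defined before the proposition coincides with $A_\xi(x)$, and the integral in \eqref{ex} is literally the Erickson--Maller integral (the domain $\R\setminus[-e,e]$ being the set where $\log|y|\geq 1$, matching the $\log^+$).

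Second, I would address the one genuine subtlety: Erickson--Maller also require that $\xi$ and $\eta$ have no common jumps, or else formulate the test with an extra term, \emph{or} they allow arbitrary dependence but phrase things via the bivariate Lévy measure. In our construction $X$ and $Z$ \emph{can} jump simultaneously (whenever $N$ charges a point $(\tau_k,(i_k,j_k))$ with both $i_k\neq0$ and $j_k\neq0$), so I must check which version of their theorem applies and confirm that the simultaneous-jump contribution is already absorbed into the stated criterion. The key point is that the relevant quantity controlling convergence near $+\infty$ is the tail behaviour of $e^{-X_{s-}}$ times the jumps of $Z$, and since $X_{s-}$ is the value \emph{just before} the jump, the possible simultaneous jump of $X$ at time $s$ does not enter; thus the only data that matter are the marginal law of $X$ (through $A$, equivalently $\Lambda_1$) and the marginal jump structure of $Z$ (through $\Lambda_2$). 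I would cite the precise statement of \cite[Theorem~2]{Erickson+Maller:2005} to confirm it is stated at this level of generality (arbitrary dependence, criterion in terms of the marginals $\Lambda_1,\Lambda_2$), so that no correction term is needed.

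Third, with the dictionary in place the proof is essentially a quotation: under \eqref{ex}, namely $X_t\to+\infty$ a.s.\ and the integral test, \cite[Theorem~2]{Erickson+Maller:2005} gives that $\int_{[0,t]}e^{-X_{s-}}\dd Z_s$ converges a.s.\ to a finite limit as $t\to\infty$, which is exactly \eqref{exist}; and conversely, if either part of \eqref{ex} fails, the same theorem yields that the integral does not converge to a finite limit (in the Erickson--Maller dichotomy, failure of $\xi\to+\infty$ forces divergence or oscillation, and failure of the integral test forces $\limsup=+\infty$ in absolute value), so \eqref{exist} fails. A minor point to handle carefully is that our $Z$ is only the difference of two subordinators rather than a general Lévy process, but this is a restriction on the input, not an obstacle: the theorem applies a fortiori, and one just notes that for such $Z$ the integrability condition in \eqref{assump} guarantees $\int(|y|\wedge1)\Lambda_2(\dd y)<\infty$ so that all integrals are well defined.

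The main obstacle I anticipate is purely bookkeeping: matching the notation and, in particular, verifying that the function $A$ and the integral test in \eqref{ex} are verbatim the Erickson--Maller quantities once one accounts for the $\log^+$ versus the truncation at $\pm e$, and confirming that the simultaneous-jumps case is covered without a correction term. There is no serious analytic difficulty here since all the hard work is done in \cite{Erickson+Maller:2005}; the value added by the proposition is the observation that the abstract criterion, when specialized to the perturbed multiplicative Lévy process and rewritten using $A$, takes the clean form \eqref{ex}.
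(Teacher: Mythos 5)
Your proposal follows exactly the route the paper takes: the paper gives no proof of Proposition~\ref{maller} beyond the remark (and attached footnote) that it is a specialization of Theorem~2 of \cite{Erickson+Maller:2005} to the case where the second component of the driving bivariate L\'{e}vy process is drift-free of bounded variation, and your task of matching $\Lambda_1,\Lambda_2$ and $A$ to the Erickson--Maller quantities is precisely what ``specialization'' means here. The one concern you raise, about simultaneous jumps of $X$ and $Z$, is a non-issue and you resolve it correctly: Erickson--Maller's Theorem~2 is stated for a bivariate L\'{e}vy process with arbitrary joint jump structure and the criterion depends only on the marginal L\'{e}vy measure of $\xi$ (through $A$) and of $\eta$ (through $\Lambda_2$), precisely because the integrand is $e^{-\xi_{s-}}$, taken at the left limit.
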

It should not come as a surprise that Proposition \ref{maller} is
very similar to Theorem 2.1 in \cite{Goldie+Maller:2000} which
provides a criterion for the a.s.\@ finiteness of discrete-time
perpetuities $Z$.

\section{Power moments of Lévy-type perpetuities}

\subsection{Main result}\label{ma}

The purpose of this section is to point out
necessary and sufficient conditions for the finiteness of power
moments of $S$. Before formulating the
corresponding result we note that the distribution of $S$ is
degenerate if, and only if, it is degenerate at $0$, and that the
latter occurs if, and only if, $\Lambda_2$ is trivial which means
that $\Lambda_2\equiv 0$. The non-obvious part of this statement,
that is, that the distribution of $S$ cannot be degenerate at a
nonzero point follows from the fact that $Z$ does not have a
Brownian component and Theorem 2.2 in
\cite{Bertoin+Lindner+Maller:2008}.

\begin{theorem}\label{moments}
Assume that $\Lambda_2$ is nontrivial and
let $p>0$. The following assertions are equivalent:
\begin{equation}\label{mom11}
\E e^{-pX_1}<1\quad\text{and}\quad \int_{\R\backslash [-1,1]}|y|^p
\Lambda_2({\rm d}y)<\infty;
\end{equation}
\begin{equation}\label{mom12}
\E |S|^p<\infty.
\end{equation}
\end{theorem}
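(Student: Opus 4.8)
The plan is to exploit the self-similar (random fixed-point) structure of the Lévy-type perpetuity $S$, in the same spirit as for discrete perpetuities, together with a truncation analysis of the Lévy measure $\Lambda_2$. First I would establish a distributional equation: splitting the integral $\int_{[0,\infty)} e^{-X_{s-}}\,\dd Z_s$ at a fixed time $t=1$ (or at the first jump of a suitable compound-Poisson piece) gives $S \egaldistr S_1 + e^{-X_1} S'$, where $S'$ is an independent copy of $S$ and $S_1 = \int_{[0,1]} e^{-X_{s-}}\,\dd Z_s$. This uses the independence and stationarity of the increments of the two-dimensional Lévy process $(X,Z)$ and the fact that $Z$ has bounded variation so that $S_1$ is an honest (a.s.\ absolutely convergent) random variable. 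I would record the elementary consequence that, by independence, $\E|S|^p<\infty$ forces both $\E|S_1|^p<\infty$ and $\E e^{-pX_1}\,\E|S'|^p<\infty$; since $S$ is nondegenerate (as noted before the theorem, using $\Lambda_2$ nontrivial and \cite{Bertoin+Lindner+Maller:2008}) we get $\E e^{-pX_1}\le 1$, and a small additional argument upgrades this to the strict inequality $\E e^{-pX_1}<1$.

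Next, for the necessity of the jump condition $\int_{\R\setminus[-1,1]}|y|^p\,\Lambda_2(\dd y)<\infty$, I would argue that $|S|$ dominates (in an appropriate sense) a single large jump contribution: conditionally on the Poisson configuration, the terms $e^{-X_{\tau_k-}}j_k$ with $|j_k|$ large are, up to the independent positive factor $e^{-X_{\tau_k-}}$, a sum over the large jumps, and because the $X$-process value at an independent exponential-type time has a moment generating function that is finite and bounded below away from $0$ on compact intervals, $\E|S|^p<\infty$ cannot hold unless the large jumps themselves have a finite $p$th moment against $\Lambda_2$. Concretely I would isolate, say, the first term in \eqref{perp} whose jump exceeds $1$ in absolute value, use $|S| \ge |e^{-X_{\tau-}} j| - |S-e^{-X_{\tau-}}j|$ together with a conditional independence/Fubini computation, and the divergence of $\int |y|^p\Lambda_2(\dd y)$ over $|y|>1$ would then contradict $\E|S|^p<\infty$. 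This direction \eqref{mom12}$\Rightarrow$\eqref{mom11} is the more delicate one, since one must carefully disentangle the large jump from the rest of the series; I expect this to be the main obstacle, and the cleanest route is probably to compare with the discrete-time theory by sampling $X$ along the jump times of the "big-jump" part of $Z$, reducing to a perturbed multiplicative random walk to which a known moment result for discrete perpetuities (in the vein of Alsmeyer–Iksanov–Rösler) applies.

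For the converse \eqref{mom11}$\Rightarrow$\eqref{mom12}, I would split $Z = Z^{(s)} + Z^{(b)}$ into its small-jump part ($|y|\le 1$, a square-integrable martingale after centering — here, though, $Z$ is already drift-free of bounded variation so $Z^{(s)}$ is itself of bounded variation) and its big-jump part ($|y|>1$, a compound Poisson process), and correspondingly $S = S^{(s)} + S^{(b)}$. For $S^{(b)} = \sum_k e^{-X_{\tau_k-}} j_k \1_{|j_k|>1}$, conditioning on $N_2$ reduces the problem to a weighted sum with independent geometric-type exponential weights $e^{-X_{\tau_k-}}$; I would bound $\E|S^{(b)}|^p$ using the $c_p$-inequality (for $p\le 1$) or Minkowski's inequality (for $p\ge 1$) together with $\E e^{-pX_t} = e^{t\psi(p)}$ where $\psi(p)=\log\E e^{-pX_1}<0$ by hypothesis, so the geometric decay of the weights along successive Poisson points yields a convergent bound provided $\int_{|y|>1}|y|^p\Lambda_2(\dd y)<\infty$. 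For $S^{(s)}$, I would use that $e^{-X_{s-}}$ decays, invoke a Burkholder–Davis–Gundy or moment inequality for the stochastic integral $\int e^{-X_{s-}}\dd Z^{(s)}_s$, and control the quadratic variation (or the bounded-variation norm) using $\int (y^2\wedge 1)\Lambda_2(\dd y)<\infty$, which is automatic from \eqref{assump}; the exponential decay of the integrand together with $\E e^{-pX_1}<1$ again makes the relevant integral over $[0,\infty)$ finite. Combining the two bounds via $\E|S|^p \le c_p(\E|S^{(s)}|^p + \E|S^{(b)}|^p)$ completes the proof.
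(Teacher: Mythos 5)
Your opening paragraph identifies the paper's own reduction: writing the perpetuity as the fixed point $S \egaldistr Q_\ast + M_\ast S'$ with $(M_\ast, Q_\ast) = \bigl(e^{-X_1}, \int_{[0,1]} e^{-X_{s-}}\,\dd Z_s\bigr)$ and $S'$ an independent copy of $S$, then invoking the Alsmeyer--Iksanov--R\"{o}sler criterion (Proposition \ref{air}) and its nondegeneracy clause to get $\E e^{-pX_1}<1$ for free. The paper in fact uses this one-step reduction for \emph{both} implications; your necessity argument stays close to it, and the isolation of a single large jump you sketch is essentially what the paper does by conditioning on the event that $Z^{(2)}$ has exactly one jump in $[0,1]$.

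The genuine gap is in your sufficiency argument for $p\in(0,1)$. You propose to control the small-jump contribution $\int_{[0,\infty)} e^{-X_{s-}}\,\dd Z^{(s)}_s$ via BDG or subadditivity; but whether you raise the sum of jumps to the power $p$ term by term, or pass through the quadratic variation and then use subadditivity of $x\mapsto x^{p/2}$, you are led to $\int_{|y|\le 1}|y|^p\,\Lambda_2(\dd y)$. This is \emph{not} implied by the standing assumption $\int (|y|\wedge 1)\,\Lambda_2(\dd y)<\infty$: take $\Lambda_2(\dd y)=y^{-3/2}\,\dd y$ on $(0,1)$ and $p\le \tfrac12$, for which $\int(|y|\wedge 1)\Lambda_2(\dd y)=2<\infty$ while $\int_{(0,1)}|y|^p\Lambda_2(\dd y)=\infty$. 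So the bound you would obtain is vacuous precisely in the regime $p<1$ that the theorem is supposed to cover. The paper's Proposition \ref{behme} sidesteps this: after discretizing time and using that $Z^{(1)}$ is a subordinator, it applies the Chow--Teicher inequality $\E\bigl(\sum_k Y_k\bigr)^p\le 2\,\E\bigl(\sum_k \E[Y_k\mid\mathcal{F}_{k-1}]\bigr)^p$ for nonnegative $Y_k$ and $p\in(0,1]$, which sums the conditional compensators \emph{before} raising to the $p$th power; the compensator only brings in $\int_{[-1,1]}|y|\,\Lambda_2(\dd y)$, never the $p$th moment of the small jumps. This, together with Lemma \ref{aim} to handle $\E\sup_{s\le 1}e^{-pX_s}$, is the technical device your plan is missing. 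A secondary caveat: your treatment of $S^{(b)}$ ``conditioning on $N_2$'' and treating the weights $e^{-X_{\tau_k-}}$ as independent overlooks that $(X,Z)$ can share jumps through $\Lambda$; the geometric-decay computation can be repaired, but the clean route is again to feed $\E|Q_\ast|^p<\infty$ (from Propositions \ref{behme} and \ref{aim}) into the discrete criterion, as the paper does.
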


\subsection{Auxiliary results}

Proposition \ref{air} and Proposition \ref{behme}
given below are our main technical tools for the proof of Theorem
\ref{moments}. We start by recalling a criterion obtained in
Theorem 1.4 of \cite{Alsmeyer+Iksanov+Roesler:2009} for the
finiteness of power
moments of discrete-time perpetuities $Z$.
\begin{proposition}\label{air}
Let $p>0$ and suppose that
\begin{equation}\label{nondeg}
\P\{M=0\}=0\quad\text{and}\quad \P\{Q=0\}<1
\end{equation}
and that
\begin{equation}\label{nondeg2}
\P\{Q+Mr=r\}<1\quad \text{for all}~r\in\R.
\end{equation}
The following assertions are equivalent:
\begin{equation}\label{mom1}
\E |M|^p<1\quad\text{and}\quad \E |Q|^p<\infty;
\end{equation}
\begin{equation}\label{mom2}
\E |Z|^p<\infty.
\end{equation}
\end{proposition}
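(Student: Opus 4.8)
The backbone of any proof here is the self-similar (stochastic fixed-point) structure of the perpetuity. Writing $Z_n := \sum_{k=1}^n \Pi_{k-1}Q_k$, the tail of the series factors out: on the underlying probability space one has the almost sure identity $Z = Z_n + \Pi_n R_n$, where $R_n := \sum_{j\ge 1}(M_{n+1}\cdots M_{n+j-1})Q_{n+j}$ is the perpetuity attached to the shifted sequence $(M_{n+i},Q_{n+i})_{i\ge 1}$, so that $R_n \egaldistr Z$ and $R_n$ is independent of $\calF_n := \sigma\{(M_i,Q_i) : i\le n\}$; the case $n=1$ reads $Z \egaldistr Q + MR$ with $R \egaldistr Z$ independent of $(M,Q)$. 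I would first record that \eqref{nondeg2} is precisely the statement that $Z$ is non-degenerate: if $Z$ were a.s.\ equal to a constant $r$, the identity with $n=1$ would give $\P\{Q+Mr=r\}=1$. It is this, together with $\P\{M=0\}=0$, rather than the full strength of \eqref{nondeg}, that the converse direction will use.

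The implication \eqref{mom1}$\Rightarrow$\eqref{mom2} is routine and needs no non-degeneracy. For $0<p\le 1$, subadditivity of $t\mapsto t^p$ gives
\[
\E\Big(\sum_{k\ge 1}|\Theta_k|\Big)^{p} \le \sum_{k\ge 1}\E|\Theta_k|^p = \E|Q|^p\sum_{k\ge 1}\big(\E|M|^p\big)^{k-1} < \infty ,
\]
which at once shows that $\sum_k\Theta_k$ converges absolutely a.s.\ and bounds $\E|Z|^p$. For $p\ge 1$, Minkowski's inequality gives $\sup_n\|Z_n\|_p \le (\E|Q|^p)^{1/p}\sum_{k\ge 1}(\E|M|^p)^{(k-1)/p} < \infty$; since $\E|M|^p<1$ forces $\E|M|<1$ and $\E|Q|<\infty$ by Jensen's inequality, $\sum_k\E|\Theta_k|<\infty$, so $Z_n\to Z$ a.s., and Fatou's lemma gives $\E|Z|^p<\infty$.

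For the converse \eqref{mom2}$\Rightarrow$\eqref{mom1} I would proceed in stages. Conditioning $Z\egaldistr Q+MR$ on $R$ and writing $\mu$ for the law of $Z$ gives $\int \E|Q+Mr|^p\,\mu(\dd r) = \E|Z|^p < \infty$, hence $\E|Q+Mr|^p<\infty$ for $\mu$-a.e.\ $r$; choosing two such points $r_1\ne r_2$ (possible since $\mu$ is non-degenerate) and forming the difference $(Q+Mr_1)-(Q+Mr_2)=M(r_1-r_2)$ yields $\E|M|^p<\infty$, and then $\E|Q|^p<\infty$. To upgrade $\E|M|^p<\infty$ to $\E|M|^p\le 1$ I would combine conditioning with a reverse triangle inequality: with $\tilde Z$ an independent copy of $Z$ and $C_p := 2^{1\vee p}$ (using $|a-b|^p\le 2^{(p-1)_+}(|a|^p+|b|^p)$),
\[
\E\,|s+mZ|^{p} \;\ge\; C_p^{-1}\,\E\,\big|(s+mZ)-(s+m\tilde Z)\big|^{p} \;=\; C_p^{-1}\,|m|^{p}\,\E|Z-\tilde Z|^{p},\qquad s,m\in\R .
\]
Applying this inside $\E|Z|^p = \E\big[\,\E[\,|Z_n+\Pi_nR_n|^{p}\mid\calF_n\,]\,\big]$ with $s=Z_n$, $m=\Pi_n$ (and using $R_n\perp\calF_n$, $R_n\egaldistr Z$) gives $\E|Z|^p \ge C_p^{-1}(\E|M|^p)^{n}\,\E|Z-\tilde Z|^{p}$; since $Z$ is non-degenerate, $\E|Z-\tilde Z|^p\in(0,\infty)$, so the left side being finite and independent of $n$ forces $\E|M|^p\le 1$.

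The remaining task — and, I expect, the main obstacle — is to exclude $\E|M|^p=1$. From $\Pi_nR_n=Z-Z_n\to 0$ a.s., together with $R_n\egaldistr Z$ being non-degenerate and independent of $\Pi_n$, one deduces $\Pi_n\to 0$ in probability; hence the random walk $\log|\Pi_n|=\sum_{i\le n}\log|M_i|$ (well defined since $\P\{M=0\}=0$) tends to $-\infty$ in probability, which forces $\E\log|M|<0$. Consequently $\varphi(\theta):=\E|M|^\theta$ is convex and finite on $[0,p]$ with $\varphi(0)=1$ and $\varphi'(0+)<0$, so the assumption $\varphi(p)=1$ places us exactly in the Cramér situation $\varphi<1$ on $(0,p)$, $\varphi(p)=1$. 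In this regime the fixed-point equation for $Z$, together with $\E|Q|^p<\infty$ and the non-degeneracy hypothesis \eqref{nondeg2}, forces $\E|Z|^p=\infty$ by renewal-theoretic tail estimates (the Kesten--Grincevi\v{c}ius--Goldie theory via the implicit renewal theorem), where \eqref{nondeg2} is precisely what keeps the relevant renewal constants positive — note that if \eqref{nondeg2} failed, $Z$ would be the constant fixed point $r_0$ solving $Q+Mr_0=r_0$ a.s.\ and $\E|Z|^p$ would be finite. This contradicts the assumption $\E|Z|^p<\infty$, so $\E|M|^p<1$. (An alternative route to the same contradiction: tilt by the mean-one martingale $|\Pi_n|^p$, under which $\log|M|$ acquires positive mean, so that $|\Pi_{n-1}|\to\infty$, the series $\sum_k\Pi_{k-1}Q_k$ diverges a.s., and a Biggins-type dichotomy converts this divergence into $\E|Z|^p=\infty$.) What makes this last step hard is that at the boundary the crude recursive tail bounds coming from $Z\egaldistr Q+MR$ do not close — the relevant constants propagate in the wrong direction — so a genuinely renewal-theoretic input is unavoidable. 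Together with $\E|Q|^p<\infty$, this establishes \eqref{mom1} and completes the proof.
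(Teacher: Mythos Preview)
The paper does not prove this proposition at all: it is quoted as Theorem~1.4 of Alsmeyer--Iksanov--R\"osler \cite{Alsmeyer+Iksanov+Roesler:2009} and used thereafter as a black box. So there is no in-paper argument to compare against, and your write-up is in effect an attempted reconstruction of that external result.

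On its own merits your reconstruction is solid through the bound $\E|M|^p\le 1$: the forward implication is standard, the two-support-point trick (pick $r_1\neq r_2$ with $\E|Q+Mr_i|^p<\infty$ and subtract) correctly yields $\E|M|^p,\E|Q|^p<\infty$, and the symmetrisation estimate $\E|Z|^p\ge C_p^{-1}(\E|M|^p)^n\,\E|Z-\tilde Z|^p$ is a clean way to force $\E|M|^p\le 1$. These steps match the approach in \cite{Alsmeyer+Iksanov+Roesler:2009}.

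The gap is at the boundary $\E|M|^p=1$, which you correctly flag as the crux but do not actually close. Goldie's implicit-renewal theorem, in the form you invoke, carries side hypotheses --- a non-lattice condition on $\log|M|$ and the moment bound $\E|M|^p\log^+|M|<\infty$ --- that are not consequences of \eqref{nondeg}--\eqref{nondeg2}. Your tilting alternative can be pushed further than you indicate (one always has $\tilde\E(\log|M|)_-\le (pe)^{-1}<\infty$, so $|\Pi_n|\to\infty$ under the tilt regardless of whether $\varphi'(p)$ is finite), but the final step --- converting ``the series diverges $\tilde\P$-a.s.'' into ``$\E_\P|Z|^p=\infty$'' --- is not the Biggins dichotomy and needs its own argument, since $\tilde\P$ and $\P$ are mutually singular on $\calF_\infty$ here. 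The cited paper disposes of the boundary case by a direct, self-contained argument rather than by quoting Kesten--Goldie off the shelf; to make your proof complete you would need either to supply that argument or to reduce to a version of the heavy-tail theorem whose hypotheses you actually verify.
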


The next proposition gives sufficient conditions for the finiteness of the $p$th moment of the integral of an adapted process against the Lévy process $Z$ defined in Section \ref{defi}.

\begin{proposition}\label{behme}
Let $(Z_s)_{s\geq 0}$ be a drift-free Lévy process of finite
variation (as defined in Section \ref{defi})  and $(H_s)_{s\geq
0}$ an adapted c\`{a}dl\`{a}g process. Suppose that there exists
$p>0$ such that $\E |Z_1|^p<\infty$ and $\E\sup_{s\in
(0,1]}|H_s|^p<\infty$. Then
\[\E \Big|\int_{(0,1]} H_{s-}{\rm d}Z_s\Big|^p<\infty.\]
\end{proposition}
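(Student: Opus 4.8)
The plan is to split the Lévy process $Z$ into its "large jumps" part and its "small jumps" (bounded) part, handle each separately, and combine via the triangle inequality in $L_p$. Since $Z$ is a drift-free finite-variation Lévy process, write $Z = Z' + Z''$, where $Z'_t := \sum_{0 < s \le t} (\Delta Z_s)\1_{\{|\Delta Z_s| > 1\}}$ collects the jumps of absolute size exceeding $1$ and $Z'' := Z - Z'$ is a finite-variation Lévy process all of whose jumps are bounded by $1$ in absolute value; both are independent of nothing in particular but are honest semimartingales, and $(H_{s-})$ remains adapted and càglàd with respect to a filtration making both integrals well defined. The two contributions $\int_{(0,1]} H_{s-}\,\dd Z'_s$ and $\int_{(0,1]} H_{s-}\,\dd Z''_s$ will be bounded in $L_p$ separately.

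For the large-jump part, note that $Z'$ has only finitely many jumps on $(0,1]$ almost surely, say at times $T_1, T_2, \dots$ with jump sizes $\xi_1, \xi_2, \dots$, so that $\int_{(0,1]} H_{s-}\,\dd Z'_s = \sum_i H_{T_i-}\,\xi_i$ is an absolutely convergent (finite) sum. Bounding $|H_{T_i-}| \le \sup_{s \in (0,1]}|H_s| =: H^*$ gives $\big|\int_{(0,1]} H_{s-}\,\dd Z'_s\big| \le H^* \sum_i |\xi_i| \le H^* \cdot V'$, where $V' := \sum_{0<s\le 1}|\Delta Z_s|\1_{\{|\Delta Z_s|>1\}}$ is the total variation of $Z'$ on $(0,1]$. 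Since $V'$ is built from the large jumps alone, it is a compound-Poisson-type functional depending only on the jump measure restricted to $\{|y|>1\}$, hence independent of $H^*$ only if $H$ is; in general one does not have independence, so instead one uses that $\E|Z_1|^p<\infty$ forces $\int_{|y|>1}|y|^p\,\Lambda_2(\dd y)<\infty$, which in turn gives $\E (V')^{p}<\infty$ (and even $\E (V')^{q}<\infty$ for the relevant exponents, by standard moment estimates for compound Poisson sums, e.g.\ using that $V'$ is a subordinator evaluated at time $1$ with Lévy measure of finite $p$th moment). One then applies Hölder's inequality with a small twist: pick $\varepsilon>0$ and write $\E\big[(H^*)^p (V')^p\big]$ — but $H^*$ and $V'$ need not be independent, so the clean route is to handle large and small jumps via a single pathwise variation bound as below, and only invoke independence where it genuinely holds.

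The cleaner unified approach I would actually carry out: since $Z$ has finite variation, let $\mathcal{V}_t$ denote its total variation on $(0,t]$, so $\big|\int_{(0,1]} H_{s-}\,\dd Z_s\big| \le H^* \,\mathcal{V}_1$ pathwise. Now $\mathcal{V} = (\mathcal{V}_t)$ is itself a subordinator (the sum of the two subordinators into which $Z$ decomposes), with Lévy measure the pushforward of $\Lambda_2$ under $y\mapsto |y|$, and the hypothesis $\E|Z_1|^p<\infty$ is equivalent to $\int_{|y|\ge 1}|y|^p\,\Lambda_2(\dd y)<\infty$, which is equivalent to $\E\,\mathcal{V}_1^{p}<\infty$. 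Crucially, $\mathcal{V}_1$ is a functional of $Z$ alone while $H^*=\sup_{s\in(0,1]}|H_s|$ need not be independent of it; however, if $1 < p$ we may apply Hölder with exponents $p/(p-\delta)$ and $p/\delta$ — this still needs a moment of $H^*$ beyond $p$, which we are not given. Therefore the genuinely correct line, and the one I would pursue, is a Burkholder–Davis–Gundy / Marcinkiewicz-type inequality for stochastic integrals: decompose $Z=Z''+Z'$ with $Z''$ a martingale plus a compensator of bounded jumps and $Z'$ compound Poisson; for $p\le 1$ use subadditivity of $x\mapsto x^p$ directly on $\sum|H_{s-}||\Delta Z_s|$ together with the compensation formula to write $\E\sum_{s\le 1}|H_{s-}|^p|\Delta Z_s|^p = \E\!\int_0^1\!|H_{s-}|^p\,\dd s \int_{\R}|y|^p\Lambda_2(\dd y)$, which is finite by Fubini and the two hypotheses; for $p>1$ combine this with Doob's $L_p$-inequality applied to the martingale part. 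The main obstacle is precisely the interaction between $p\le 1$ and $p>1$ and the lack of independence between $H$ and $Z$: for $p\le 1$ the concavity trick via the Lévy–Khintchine compensation formula is clean and self-contained, whereas for $p>1$ one must invoke a BDG-type estimate (or Novikov's inequality for discontinuous martingales) to bound the martingale part and then treat the bounded-variation drift/compensator part by a pathwise $\sup \times$ variation bound, using that the compensator has finite $p$th moment because $\int_{|y|\le 1}|y|\,\Lambda_2(\dd y)<\infty$ makes it a finite-variation deterministic-rate process. I expect the write-up to reduce to: (i) the $p\le 1$ compensation-formula computation, (ii) citing a discontinuous BDG inequality for the $p>1$ martingale part, and (iii) a routine bound $\E[(H^*)^p \mathcal{V}_1'^{\,p}]$ for the large-jump remainder, where $\mathcal{V}_1'$ has all moments we need and one final application of Hölder (now legitimate, since after removing the martingale part the remaining variation functional can be taken with as high a moment as desired by choosing the truncation level, trading against $\E(H^*)^p<\infty$).
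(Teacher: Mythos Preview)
Your proposal contains a genuine gap in the treatment of the case $p\in(0,1)$. Your ``genuinely correct line'' applies the compensation formula globally to obtain
\[
\E\sum_{0<s\le 1}|H_{s-}|^p|\Delta Z_s|^p \;=\; \E\!\int_0^1|H_{s-}|^p\,\dd s\cdot\int_\R |y|^p\,\Lambda_2(\dd y),
\]
and then claims the right-hand side is finite ``by the two hypotheses''. But the hypotheses only give $\int_{|y|>1}|y|^p\,\Lambda_2(\dd y)<\infty$ (equivalent to $\E|Z_1|^p<\infty$) and $\int_{|y|\le 1}|y|\,\Lambda_2(\dd y)<\infty$ (the standing finite-variation assumption). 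For $p<1$ one has $|y|^p\ge|y|$ on $[-1,1]$, so $\int_{|y|\le 1}|y|^p\,\Lambda_2(\dd y)$ can be infinite: take for instance $\Lambda_2(\dd y)=y^{-3/2}\1_{(0,1]}(y)\,\dd y$, for which $\int_0^1 y\cdot y^{-3/2}\,\dd y=2$ but $\int_0^1 y^{1/2}\cdot y^{-3/2}\,\dd y=\infty$. Hence the global compensation argument collapses for the small-jump part, and this is not a cosmetic issue: the subadditivity bound $|\sum a_i|^p\le\sum|a_i|^p$ is simply too wasteful on infinitely many small increments when $p<1$.

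The paper handles this precisely by \emph{not} raising individual small jumps to the power $p$. After splitting off the large jumps (where your compensation argument does work, and essentially coincides with the paper's), it treats the small-jump subordinator $Z^{(1)}$ via a discretisation: partition $(0,1]$ into intervals $I_{k,n}$, apply the inequality $\E\big(\sum_k U_k\big)^p\le 2\,\E\big(\sum_k\E(U_k\mid\mathcal{F}_{k-1})\big)^p$ for nonnegative adapted summands (Chow--Teicher), and compute each $\E\!\big(\int_{I_{k,n}}K^A_{s-}\,\dd Z^{(1)}_s\,\big|\,\mathcal{F}_{k-1,n}\big)$ using the compensation formula with the \emph{first} moment $\int_{|y|\le 1}|y|\,\Lambda_2(\dd y)$, which \emph{is} finite. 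A truncation $K^A=|H|\wedge A$ and passage $n\to\infty$, $A\to\infty$ then finish the argument. The key idea you are missing is that for small jumps one must exploit the available first-moment integrability rather than the (possibly unavailable) $p$th-moment integrability of the L\'evy measure near zero. For $p\ge 1$ the paper simply cites Behme's Lemma~6.1; your BDG-based sketch is a legitimate alternative there, but the proposal as written does not close the $p<1$ case.
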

\begin{proof}
When $p\geq 1$ the assertion follows from Lemma 6.1 in
\cite{Behme:2011}.

Assume that $p \in (0,1)$. %Without loss of generality, we can
%replace $H$ by $(H_{t \wedge 1})_{t \geq 0}$ so
%that $\sup_{s>0} |H_s| = \sup_{s\in (0,1]}|H_s|$.
Subadditivity of $x\mapsto x^p$ on $\R_+$ and the triangle inequality entail
\begin{equation}\label{above}
\E \Big|\int_{(0,1]} H_{s-}{\rm d}Z_s\Big|^p \leq \E\left(
\int_{(0,1]} \left|H_{s-}\right| {\rm d} Z^{(1)}_s \right)^p + \E\left(
\int_{(0,1]} \left|H_{s-}\right| {\rm d} Z^{(2)}_s  \right)^p,
\end{equation}
where, for $t\geq 0$,
\[Z^{(1)}_t: = \int_{[0,\,t] \times \R}|y|\1_{[-1,1]}(y) N_2(\dd s \dd
y)=\sum_{\tau_k\leq t} |j_k| \1_{\{|j_k|\leq 1\}}
\]
and
\[Z^{(2)}_t: =\int_{[0,\,t] \times \R}
|y|\1_{\R\backslash [-1,1]}(y) N_2(\dd s \dd y)=\sum_{\tau_k\leq
t}|j_k|\1_{\{|j_k|>1\}}.\]
Note that $Z^{(i)}:=(Z_t^{(i)})_{t\geq 0}$, $i=1,2$ are drift-free subordinators. We shall prove finiteness of the two summands on the right-hand side of \eqref{above} separately.

We start by observing that $Z^{(2)}$ is a compound Poisson process with jumps sizes larger than one.
Denote by $T_1, T_2,\ldots$ the times at which $Z^{(2)}$ jumps, ranked in the increasing order, and set $R_i:=Z^{(2)}_{T_i}-Z^{(2)}_{T_i-}$ for $i\in\mn$. The sequence $(T_k)_{k\in\mn}$ forms the arrival times of a Poisson process with intensity $c:=\Lambda_2(\mr\backslash [-1, 1])$, and $(R_k)_{k\in\mn}$ are i.i.d.\@ random variables with distribution $\P\{R_1>x\}=c^{-1}\Lambda_2(\R\backslash[-x,x])$ for $x>1$ and $\P\{R_1>x\}=1$ for $x\leq 1$.
Moreover, for each fixed $i\in\mn$, $(H_{T_i-}, T_i)$ is independent of $R_i$. Using these facts
in combination with the aforementioned subadditivity we obtain
\begin{eqnarray*}
\E \Big|\int_{(0,\,1]}H_{s-}{\rm d}Z^{(2)}_s\Big|^p &\leq& \E\Big( \sum_{i\geq 1} |H_{T_i-}|^p R_i^p\1_{\{T_i\leq 1\}} \Big)
=\E\Big(\sum_{i\geq 1} |H_{T_i-}|^p \1_{\{T_i\leq 1\}} \Big) \E R_1^p\\
&=& c \E \Big(\int_0^1 |H_s|^p {\rm d}s\Big)  c^{-1}\int_{\mr\backslash[-1,\,1]} |y|^p \Lambda_2({\rm d} y),
\end{eqnarray*}
where, recalling that $(H_s)_{s\geq 0}$ is an adapted process, the second equality is justified by the compensation formula for Poisson random measures. As a result,
\[
\E \Big|\int_{(0,\,1]}H_{s-}{\rm d}Z^{(2)}_s\Big|^p \leq \E \Big(\sup_{s \in [0,1]} |H_s|^p \Big)\int_{\mr\backslash[-1,\,1]} |y|^p \Lambda_2({\rm d} y) < \infty.
\]

It remains to show that
\begin{equation}\label{fin}
\E\left(\int_{(0,1]} \left|H_{s-} \right| {\rm d} Z^{(1)}_s\right)^p<\infty.
\end{equation}
For each $A > 0$ and each $t \in [0,1]$, set $K^A_t = |H_t| \wedge A$. %, such that $K^A$ is a bounded adapted process.
Also, for each $n\in\mn$ and integer $1\leq k\leq n$, set $I_{k,n}:=((k-1)/n, k/n]$ and let $\mathcal{F}_{k,n}$ denote the $\sigma$-algebra generated by $(H_{s-}, Z_s^{(1)})_{0\leq s\leq k/n}$ (we also denote by $\mathcal{F}_{0,n}$ the trivial $\sigma$-algebra). Recalling that $Z^{(1)}$ is a drift-free subordinator we write
\begin{align*}
\E\Big(\int_{(0,\,1]} K_{s-}^A{\rm d}Z^{(1)}_s \Big)^p &= \E\Big(\sum_{k=1}^n \int_{I_{k,n}} K^A_{s-}{\rm d}Z^{(1)}_s\Big)^p\\
  &\leq 2 \E\Big(\sum_{k=1}^n \E\Big( \int_{I_{k,n}} K^A_{s-} {\rm d}Z^{(1)}_s \Big| \mathcal{F}_{k-1,n}\Big) \Big)^p\\
  &\leq 2 \E\Big( \sum_{k=1}^n \int_{I_{k,n}}\E(K^A_s|\mathcal{F}_{k-1,n}) \dd s \Big)^p \Big( \int_{[-1,\,1]} |y| \Lambda_2({\rm d}y) \Big)^p,
\end{align*}
where the first inequality follows by an application of Lemma 6 on p.~411 in \cite{Chow+Teicher:1988}, and the second inequality is a consequence of subadditivity of $x\mapsto x^p$ on $\R_+$ and the equality
$$\E\Big( \int_{I_{k,n}} K^A_{s-} {\rm d}Z^{(1)}_s \Big| \mathcal{F}_{k-1,n}\Big)=\int_{I_{k,n}}\E(K^A_s|\mathcal{F}_{k-1,n}) \dd s \int_{[-1,\,1]} |y| \Lambda_2({\rm d}y)$$ which is implied by the compensation formula for Poisson random measures. Further, letting $n \to \infty$ and using the fact that $(K^A_s)_{s\geq 0}$ is an adapted bounded process, an appeal to Lebesgue's dominated convergence theorem yields
\[
  \lim_{n \to \infty}  \E\Big(\sum_{k=1}^n \int_{I_{k,n}}\E(K^A_s|\mathcal{F}_{k-1, n}) \dd s \Big)^p = \E\Big(\int_0^1 K^A_s \dd s \Big)^p \leq \E(\sup_{s \in [0,\,1]} (K^A_s)^p).
\]
Thus, we have proved that, for each $A>0$,
\begin{align*}
\E\Big(\int_{(0,1]} (|H_{s-}| \wedge A) {\rm d}Z^{(1)}_s \Big)^p &\leq 2 \E(\sup_{s \in [0,1]} |H_s| \wedge A )^p \Big( \int_{[-1,\,1]} |y| \Lambda_2(\dd y) \Big)^p\\ &\leq  \E(\sup_{s \in [0,1]} |H_s|)^p \Big( \int_{[-1,\,1]} |y| \Lambda_2(\dd y) \Big)^p<\infty.
\end{align*}
Letting $A\to\infty$ in the latter formula, we infer \eqref{fin} with the help of Lévy's monotone convergence theorem.
\end{proof}

The result given next is a consequence of Theorem 25.18 in
\cite{Sato:1999}. A direct proof can be found in Lemma 2.1 (a) of
\cite{Aurzada+Iksanov+Meiners:2015}.
\begin{lemma}\label{aim}
Let $p>0$. If $\E e^{-pX_1}<\infty$, then
\[\E\sup_{s\in[0,1]}e^{-pX_s}=\E \exp(-p\inf_{s\in [0,1]}X_s)<\infty.\]
\end{lemma}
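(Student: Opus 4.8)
The first, purely deterministic, equality requires only that $u\mapsto e^{-pu}$ is continuous and strictly decreasing and that $\inf_{s\in[0,1]}X_s$ is a.s.\ finite (the paths of the L\'evy process $X$ are c\`{a}dl\`{a}g, hence bounded on the compact interval $[0,1]$); thus $\sup_{s\in[0,1]}e^{-pX_s}=e^{-p\inf_{s\in[0,1]}X_s}$ holds pointwise, with nothing to integrate. So the real content is the finiteness of $\E\exp\bigl(-p\inf_{s\in[0,1]}X_s\bigr)$ under the sole hypothesis $\E e^{-pX_1}<\infty$.

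The plan is to combine the exponential (Esscher) martingale of $X$ with Doob's maximal inequality. First I would note that the integrability assumption propagates to smaller exponents: for $\theta\in[0,p]$ one has $e^{-\theta X_1}\leq 1+e^{-pX_1}$ (distinguish the signs of $X_1$), so the Laplace exponent $\phi(\theta):=\log\E e^{-\theta X_1}$ is finite on $[0,p]$, and therefore $L^{(\theta)}_t:=e^{-\theta X_t-t\phi(\theta)}$ is, for every such $\theta$, a non-negative martingale on $[0,\infty)$ by the stationary independent increments of $X$. The reason for introducing a variable exponent is that Doob's $L^1$ inequality only yields a weak-type bound on $\sup_{s\leq 1}e^{-pX_s}$, whereas Doob's $L^q$ inequality with $q>1$ gives a genuine $L^q$ control.

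Concretely, I would fix $q>1$, set $\theta:=p/q<p$, and apply Doob's $L^q$ maximal inequality to $L^{(\theta)}$ on $[0,1]$:
\[
\E\Bigl[\bigl(\sup\nolimits_{s\in[0,1]}L^{(\theta)}_s\bigr)^q\Bigr]\leq\Bigl(\tfrac{q}{q-1}\Bigr)^q\E\bigl[(L^{(\theta)}_1)^q\bigr]=\Bigl(\tfrac{q}{q-1}\Bigr)^q e^{-q\phi(\theta)}\,\E e^{-pX_1}<\infty,
\]
where $q\theta=p$ was used in the last identity. Since $(L^{(\theta)}_s)^q=e^{-pX_s-qs\phi(\theta)}\geq e^{-q|\phi(\theta)|}e^{-pX_s}$ for $s\in[0,1]$, taking the supremum over $s$ and comparing with the display above bounds $\E\sup_{s\in[0,1]}e^{-pX_s}$ by a finite constant, which is the claim.

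I do not expect a genuine obstacle: the only subtlety is the $L^1$-versus-$L^q$ issue just mentioned, which forces the harmless detour through the exponent $\theta=p/q$ and is legitimate precisely because of the downward propagation of the exponential moment. (The statement is in any case classical: it also follows from Theorem 25.18 in \cite{Sato:1999}, and a self-contained argument is given in Lemma 2.1(a) of \cite{Aurzada+Iksanov+Meiners:2015}, where one instead invokes an Ottaviani-type inequality together with the tail formula $\E e^{pV}=\int_0^\infty pe^{p\lambda}\P\{V\geq\lambda\}\,\dd\lambda$ for the non-negative random variable $V=\sup_{s\in[0,1]}(-X_s)$.)
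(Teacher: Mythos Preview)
Your argument is correct. The paper itself does not prove the lemma: it merely records that the result follows from Theorem~25.18 in \cite{Sato:1999} and that a direct proof is in Lemma~2.1(a) of \cite{Aurzada+Iksanov+Meiners:2015}. You supply instead a self-contained proof via the Esscher martingale and Doob's $L^q$ maximal inequality, the key trick being to drop the exponent to $\theta=p/q<p$ so that the $q$th moment of $L^{(\theta)}_1$ reduces to the assumed quantity $\E e^{-pX_1}$. This is different from both cited sources: Sato's Theorem~25.18 proceeds through the structure theory of g-moments of infinitely divisible laws, while the argument in \cite{Aurzada+Iksanov+Meiners:2015} (which you yourself summarize in the closing parenthesis) uses an Ottaviani-type maximal inequality together with the tail formula for $\E e^{pV}$. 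Your route is arguably the most elementary of the three, since it needs nothing beyond the exponential martingale and the textbook Doob inequality; the only point one might make explicit is that $\E e^{-\theta X_1}<\infty$ automatically yields $\E e^{-\theta X_t}=e^{t\phi(\theta)}<\infty$ for every $t\geq 0$, so that $L^{(\theta)}$ is indeed a c\`adl\`ag martingale on all of $[0,\infty)$ and Doob's inequality applies on $[0,1]$.
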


\subsection{Proof of Theorem \ref{moments}}

\begin{proof}[Proof of \eqref{mom11}$\Rightarrow$\eqref{mom12}.]
We first
show that conditions \eqref{mom11} ensure $|S|<\infty$ a.s.
Indeed, by H\"{o}lder's inequality $\E e^{-pX_1}<1$ entails $\E
X_1\in (0,\infty]$, whence $\lim_{t\to\infty}X_t=+\infty$ a.s.
Further, $\int_{|y|>1}|y|^p \Lambda_2({\rm d}y)<\infty$ ensures
$\int_{|y|>1}\log |y|\Lambda_2({\rm d}y)<\infty$ and, a fortiori,
the second condition in \eqref{ex}. Now $|S|<\infty$ a.s.\@ follows
from Proposition \ref{maller}.

Now observe that the random variable $S$ can be obtained as a discrete-time perpetuity generated by the pair of random variables
\[
  (M_\ast, Q_\ast):=(e^{-X_{1}}, \int_{[0,\,1]}e^{-X_{s-}}{\rm d}Z_s).\]
In view of the discussion at the beginning of
Section \ref{ma} and our assumption that $\Lambda_2$ is
nontrivial, the distribution of $S$ is nondegenerate. Therefore,
$\P\{Q_\ast+M_\ast r=r\}<1$ for all $r\in\mr$. This enables us to
invoke Proposition \ref{air} which states that $\E|S|^p<\infty$
if, and only if, $\E M_\ast^p=\E e^{-pX_1}<1$ and $\E
|Q_\ast|^p=\E|\int_{[0,\,1]}e^{-X_{s-}}{\rm d}Z_s|^p<\infty$.

It is well-known that the second assumption in \eqref{mom11} is
equivalent to $\E |Z_1|^p<\infty$ (see, for instance, Theorem 25.3
on p.~159 in \cite{Sato:1999}). By Lemma \ref{aim}, the first
condition in \eqref{mom11} guarantees $\E\sup_{s\in
[0,1]}e^{-pX_s}<\infty$. With these at hand we infer
$\E|Q_\ast|^p<\infty$ by Proposition \ref{behme}.
\end{proof}

\begin{proof}[Proof of \eqref{mom12}$\Rightarrow$\eqref{mom11}.]
We assume that $\Lambda_2$ charges all the punctured line
$\R\backslash\{0\}$. Otherwise, the proof becomes simpler. We have
$\E M_\ast^p=\E e^{-pX_1}<1$ by another appeal to Proposition~\ref{air}.
Using the inequality
\[|x+y|^p\geq (2^{1-p}\wedge 1)|x|^p-|y|^p,\quad x,y\in\R\]
which is implied by convexity
(respectively subadditivity) of $s\mapsto s^p$ for $s\geq 0$ when
$p\geq 1$ (resp. when $p\in (0,1)$) we obtain
\begin{align*}
\infty&>\E |S|^p=\E \Big|\int_{[0,\,1]}e^{-X_{s-}}{\rm
d}\tilde Z_s^{(1)}+\int_{[0,\,1]}e^{-X_{s-}}{\rm
d}\tilde Z_s^{(2)}\Big|^p\\&\geq (2^{1-p}\wedge 1)\E
\Big|\int_{[0,\,1]}e^{-X_{s-}}{\rm
d}\tilde Z_s^{(2)}\Big|^p-\E\Big|\int_{[0,\,1]}e^{-X_{s-}}{\rm
d}\tilde Z_s^{(1)}\Big|^p,
\end{align*}
where, for $t \geq 0$,
\begin{align*}
\tilde Z^{(1)}_t &: = \int_{[0,\,t] \times \R} y\1_{[-1,1]}(y) N_2(\dd s \dd
y)=\sum_{\tau_k\leq t} j_k \1_{\{|j_k|\leq 1\}}
\\
\tilde Z^{(2)}_t &: =Z_t-\tilde Z^{(1)}_t=\int_{[0,\,t] \times \R}
y\1_{\R\backslash [-1,1]}(y) N_2(\dd s \dd y)=\sum_{\tau_k\leq
t}j_k\1_{\{|j_k|>1\}}.
\end{align*}
By Theorem 25.3 on p.~159 in
\cite{Sato:1999}, the random variable $|\tilde Z_1^{(1)}|$ has finite
power moments of all positive orders. In particular, $\E
|\tilde Z_1^{(1)}|^p<\infty$. Hence, according to
Proposition~\ref{behme}, $\E\Big|\int_{[0,\,1]}e^{-X_{s-}}{\rm
d}\tilde Z_s^{(1)}\Big|^p<\infty$. Recall the notation $(T_i, R_i)_{i\in\mn}$ introduced in
the proof of Proposition~\ref{behme} for the jump times and jump sizes of $Z^{(2)}$, respectively. Noting that $T_1, T_2,\ldots$ are also the jump times of 
$\tilde Z^{(2)}$, set $V_i:=\tilde Z^{(2)}_{T_i}-\tilde Z^{(2)}_{T_i-}$ for $i\in\mn$ and observe that $|V_i|=R_i$. We infer 
\begin{align*}
\infty>&\E\Big|\int_{[0,\,1]}e^{-X_{s-}}{\rm
d}\tilde Z_s^{(2)}\Big|^p\geq \E\Big|\sum_{T_k\leq
1}e^{-X_{T_k-}}V_k \Big|^p\1_{\{T_1\leq 1<T_2\}}= \E
|e^{-X_{T_1-}}V_1|^p e^{-c}c\\=&\E e^{-pX_{T_1-}}\E|V_1|^p
e^{-c}c,
\end{align*}
where $c=\Lambda_2(\R\backslash [-1,1])$, thereby proving that $\E |V_1|^p<\infty$ or, equivalently, that
the second inequality in \eqref{mom11} holds. The proof of Theorem
\ref{moments} is complete.
\end{proof}

\section{Applications to branching Lévy processes}

\subsection{Definitions and main result}\label{def1}

Branching Lévy processes are a continuous-time generalization
of branching random walks. Similarly to Lévy processes (see
\eqref{levy}), branching Lévy processes are characterized by a
triplet $(\sigma^2,a,\Pi)$, where $\sigma^2 \geq 0$, $a \in \R$
and $\Pi$ is a sigma-finite measure on
\[
  \mathcal{P} := \left\{ \mathbf{x} = (x_n) \in [-\infty,\infty)^\N : x_1 \geq x_2 \geq \cdots \quad \text{and} \quad \lim_{n \to \infty}
  x_n = -\infty \right\}.
\]
Also, it is assumed that $\Pi$ satisfies
\begin{equation}\label{eqn:levyEve}
\int_{\mathcal{P}} (x_1^2 \wedge 1) \Pi(\dd\mathbf{x}) < \infty,
\end{equation}
and that there exists $\theta > 0$ such that
\begin{equation}\label{eqn:finiteExpMoment}
\int_{\mathcal{P}} \left(e^{\theta x_1}\1_{(1,\infty)}(x_1)
+ \sum_{j \geq 2} e^{\theta x_j}\right) \Pi(\dd \mathbf{x}) <
\infty.
\end{equation}
In the sequel we reserve the letter $\theta$ to denote a fixed (possibly
unique) positive number for which \eqref{eqn:finiteExpMoment}
holds.

The set of individuals alive at time $t$ which we denote by
$\mathcal{N}_t$ can be encoded using an adaptation of Ulam-Harris
notation (see \cite{Shi+Watson:2017} for the proposed encoding in
the context of compensated fragmentations). For all $s \leq t$ and
all individual $u$ alive at time $t$, we write $X_s(u)$ for the position at
time $s$ of $u$ if $u \in \mathcal{N}_s$, and for the position of
its ancestor at time $s$ if $u \notin \mathcal{N}_s$.

We outline the evolution of a branching Lévy process with
characteristics $(\sigma^2,a,\Pi)$ and refer to Sections 4 and 5
in \cite{Bertoin+Mallein:2018b} for more details. Denote by $\mathcal{N} = \sum \varepsilon_{(t_k,\x^{(k)})}$ a
Poisson random measure on $\R_+ \times \mathcal{P}$ with mean measure
$\mathrm{LEB}\otimes \Pi$. The position of the initial particle in
the branching Lévy process follows the path of the process
$(X_t(\oslash))_{t\geq 0}$ defined by
\begin{align}\label{ito}
X_t(\oslash):= \sigma B^\ast_t + at &+ \int_{[0,\,t] \times
\mathcal{P}} x_1\1_{[-1,1]}(x_1)\mathcal{N}^c(\dd s \dd
\x)\notag\\&+ \int_{[0,\,t] \times \mathcal{P}}
x_1\1_{\mr\backslash [-1,1]}(x_1)\mathcal{N}(\dd s \dd \x),\quad
t\geq 0,
\end{align}
where $(B^\ast_t)_{t\geq 0}$ is a Brownian motion independent of
$\mathcal{N}$, and the first Poisson integral is taken in the
compensated sense (see Section \ref{defi} for more details concerning a similar integral).
For each atom $(t_k, \x^{(k)})$ of $\mathcal{N}$, the initial
particle gives birth at time $t_k$ to new individuals which are
started at position $X_{t_k-}(\oslash) + x^{(k)}_2,
X_{t_k-}(\oslash) + x^{(k)}_3, \ldots$. Each of the newborn
particles then starts an independent copy of the branching
Lévy process from their birth time and position. Note that
$(X_t(\oslash))_{t\geq 0}$ is a Lévy process with
characteristic triplet $(\sigma^2, a, \Pi_1)$, where $\Pi_1$ is
the image measure of $\Pi$ under the mapping $\x \to x_1$, and
\eqref{ito} is its Lévy-It\^{o} decomposition (compare with
\eqref{levy}). Condition \eqref{eqn:levyEve} guarantees that this
Lévy process is well-defined.

For $z \in \C$ with $\mathrm{Re}(z) = \theta$, set
\[
\kappa(z) = \frac{1}{2} \sigma^2 z^2 + a z + \int_{\mathcal{P}} \left( \sum_{k \geq 1}( e^{z x_k} - 1 - z x_1\1_{(-1,1)}(x_1))
\right) \Pi(\dd \x).
\]
Condition \eqref{eqn:finiteExpMoment} ensures that $\kappa(z)$ is
finite on its domain. By \cite[Theorem
1.1(ii)]{Bertoin+Mallein:2018b}, we have, for $t \geq 0$,
\begin{equation}\label{eqn:cumulant}
\E\left( \sum_{u \in \mathcal{N}_t} e^{z X_t(u)} \right) = \exp(t
\kappa(z)).
\end{equation}
Therefore, it is natural to say that $\kappa(z)$
is the value at $z$ of the {\it cumulant generating function} of
the branching Lévy process.
For later needs we also note that according to the many-to-one
formula for branching Lévy processes (\cite[Lemma
2.2]{Bertoin+Mallein:2018b}), the function $\Psi:\mr\to\mathbb{C}$
defined by
\begin{equation}\label{eqn:defPsi}
\Psi(s):= \kappa(\theta +{\rm i}s) - \kappa(\theta)
\end{equation}
is the Lévy-Khinchine exponent of a Lévy process that we
denote by $\xi=(\xi_t)_{t\geq 0}$.

The branching property of the branching Lévy process tells us
that conditionally on the positions of the particles at time $t$
the processes initiated by these particles are i.i.d.\@ branching
Lévy processes, shifted by the position of their ancestor, see
\cite[Fact (B)]{Bertoin+Mallein:2018}. The branching property in combination with \eqref{eqn:cumulant}
imply that the process $W:=(W_t)_{t\geq 0}$ defined by
\begin{equation}\label{eqn:martingale}
W_t := \sum_{u \in \mathcal{N}_t} e^{\theta X_t(u) - t
\kappa(\theta)},\quad t\geq 0
\end{equation}
is a non-negative continuous-time martingale with respect to the
natural filtration. This martingale, often called Biggins' or
McKean's martingale, and its a.s.\@ limit $W_\infty$ are of
primary importance for the study of branching Lévy processes.
According to a classical result in the field of branching
processes
\[
  \P\{W_\infty = 0\} \in \{ \P\{\exists t > 0 : \mathcal{N}_t = \oslash\}, 1\},
\]
i.e., either $W_\infty$ is strictly positive a.s.\@ on the survival
set of the branching Lévy process or $W_\infty=0$ a.s. While
the first case is equivalent to the uniform integrability of the
martingale $W$, the second one is called the degenerate case.

We are ready to state the second main result of the present
article.
\begin{theorem}\label{thm:martingale}
Let $X$ be a branching Lévy process satisfying
\eqref{eqn:levyEve} and \eqref{eqn:finiteExpMoment}, $W$ the
corresponding Biggins martingale, and $\xi$ the Lévy process
with the Lévy-Khinchine exponent given in \eqref{eqn:defPsi}.
\begin{enumerate}
  \item[(i)] The martingale $W$ is uniformly integrable if, and only
  if,
\begin{align}
  &\nonumber \lim_{t \to \infty}(\theta \xi_t - t \kappa(\theta)) = -\infty~\text{{\rm a.s.}}
  \\
  \text{and} \quad&  \label{eqn:NSCuniformIntegrability}
 \int_{\mathcal{P}} \sum_{k \geq 1} e^{\theta x_k}
\frac{\log \left(\sum_{j \neq k} e^{\theta x_j}\right)}
{A\left(\log\left(\sum_{j \neq k} e^{\theta x_j}\right)\right)}
\1_{(e,\infty)}\Big(\sum_{j\neq k}e^{\theta x_j}\Big) \Pi(\dd
\x)<\infty,
\end{align}
where $A(y) = 1 +\int_{\mathcal{P}} \sum_{k \geq 1} e^{\theta x_k}
\left( (-x_k) \wedge y - 1 \right)_+\Pi(\dd \x)$ for $y\geq 1$.
\item[(ii)] Let $p\in(1,2]$. The martingale $W$ converges in $L_p$ if, and only if,
\begin{equation}\label{eqn:NSClp}
\kappa(p\theta)< p \kappa(\theta) \quad \text{and}\quad
\int_{\mathcal{P}}\sum_{k\geq 1}e^{\theta x_k}\Big(\sum_{j\neq
k}e^{\theta x_j}\Big)^{p-1}
 \1_{(e,\infty)}\Big(\sum_{j\neq k}e^{\theta x_j}\Big)\Pi(\dd
\x)<\infty
\end{equation}
\end{enumerate}
\end{theorem}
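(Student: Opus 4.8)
The plan is to combine the spinal decomposition of the branching Lévy process with the classical Lyons-type change of measure, and then to recognise the random variable governing the behaviour of $W_\infty$ under the tilted measure as a Lévy-type perpetuity of exactly the kind treated in Sections~\ref{defi}--\ref{ma}; Proposition~\ref{maller} will then yield (i) and Theorem~\ref{moments} will yield (ii). First I would introduce the size-biased probability measure $\widehat\P$ defined on the natural filtration by $\dd\widehat\P|_{\mathcal{F}_t}=W_t\,\dd\P|_{\mathcal{F}_t}$. Under $\widehat\P$ the genealogy carries a distinguished ray (the spine); by the many-to-one formula behind~\eqref{eqn:cumulant}, the position of the spine is the Lévy process $\xi$ with exponent $\Psi$ of~\eqref{eqn:defPsi}, so the martingale weight carried by the spine at time $t$ equals $e^{\theta\xi_t-t\kappa(\theta)}=e^{-\mathcal{X}_t}$ with $\mathcal{X}_t:=t\kappa(\theta)-\theta\xi_t$; and at the reproduction events along the spine --- whose intensity is $\Pi$ size-biased by $\sum_k e^{\theta x_k}$, the spine-child $k$ being then chosen proportionally to $e^{\theta x_k}$ --- the children not followed by the spine start independent copies of the branching Lévy process governed by $\P$. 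I would take this construction from \cite{Bertoin+Mallein:2018,Bertoin+Mallein:2018b}. Letting $Z_t$ be the sum over spine-reproduction times $\tau\le t$ of $\sum_{j\neq k}e^{\theta x_j^{(\tau)}}$ (the total weight, relative to the spine, of the children born at $\tau$ and not followed), the pair $(\mathcal{X},Z)$ is exactly a two-dimensional Lévy process of the form of Section~\ref{defi}: $Z$ is a drift-free subordinator whose Lévy measure $\Lambda_2$ is the image of $\sum_k e^{\theta x_k}\Pi(\dd\x)$ under $\x\mapsto\sum_{j\neq k}e^{\theta x_j}$, while the Lévy measure $\Lambda_1$ of $\mathcal{X}$ is obtained from the same tilted measure by reflecting the $x_k$-marginal; condition~\eqref{assump} holds thanks to \eqref{eqn:levyEve}--\eqref{eqn:finiteExpMoment}, and $\Lambda_2(\{y>1\})<\infty$, so the cutoff $e$ in the statements may freely be replaced by $1$. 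The key identity delivered by the spinal decomposition is that, under $\widehat\P$,
\[W_\infty=\sum_\ell b_\ell V_\ell,\qquad \sum_\ell b_\ell=S:=\int_{[0,\infty)}e^{-\mathcal{X}_{s-}}\,\dd Z_s,\]
where the $b_\ell\ge0$ are measurable with respect to the spine $\sigma$-field $\mathcal{G}$ and the $V_\ell$ are i.i.d.\ copies of $W_\infty$ under $\P$, independent of $\mathcal{G}$; here $S$ is the Lévy-type perpetuity associated with $(\mathcal{X},Z)$.

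\textbf{Part (i).} I would use the standard fact that $W$ is uniformly integrable iff $W_\infty<\infty$ $\widehat\P$-a.s. Since $W_t\ge e^{-\mathcal{X}_t}$ under $\widehat\P$ while $W_t$ converges in $[0,\infty]$ $\widehat\P$-a.s.\ ($1/W_t$ being a nonnegative $\widehat\P$-supermartingale), $W_t\to\infty$ $\widehat\P$-a.s.\ --- and $W$ is not uniformly integrable --- whenever $\limsup_t e^{-\mathcal{X}_t}=\infty$, which by the Lévy process zero-one law holds exactly when $\mathcal{X}_t=t\kappa(\theta)-\theta\xi_t$ fails to tend to $+\infty$, i.e.\ when the first line of \eqref{eqn:NSCuniformIntegrability} fails. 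When instead $\mathcal{X}_t\to+\infty$ $\widehat\P$-a.s., the inequality $\widehat\E[W_\infty\mid\mathcal{G}]=(\E_\P[W_\infty])\,S\le S$ yields $\{S<\infty\}\subseteq\{W_\infty<\infty\}$ $\widehat\P$-a.s.; for the converse, arguing by contradiction, if $W$ were uniformly integrable then $\P\{W_\infty>0\}>0$, and by the Kolmogorov-type fact that $\sum_\ell b_\ell V_\ell=\infty$ a.s.\ on $\{\sum_\ell b_\ell=\infty\}$ when the i.i.d.\ nonnegative $V_\ell$ are not a.s.\ zero, we would get $W_\infty=\infty$ $\widehat\P$-a.s.\ on $\{S=\infty\}$, contradicting uniform integrability. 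Hence, under $\mathcal{X}_t\to+\infty$, $W$ is uniformly integrable iff $S<\infty$ $\widehat\P$-a.s., and Proposition~\ref{maller} --- whose first hypothesis now holds --- characterises this by $\int_{\R\setminus[-e,e]}\frac{\log|y|}{A(\log|y|)}\Lambda_2(\dd y)<\infty$, which becomes the second line of \eqref{eqn:NSCuniformIntegrability} after computing $\Lambda_1$ and $\Lambda_2$ from $\Psi$ and from the size-biased reproduction intensity.

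\textbf{Part (ii).} Fix $p\in(1,2]$ and set $q:=p-1\in(0,1]$. Standard martingale theory gives: $W$ converges in $L_p$ $\iff$ $\sup_t\E_\P[W_t^p]<\infty$ $\iff$ $W$ is uniformly integrable and $\E_\P[W_\infty^p]<\infty$; and when $W$ is uniformly integrable, the change of measure on $\mathcal{F}_\infty$ (where $\dd\widehat\P=W_\infty\dd\P$) gives $\E_\P[W_\infty^p]=\widehat\E[W_\infty^{q}]$. By concavity of $t\mapsto t^{q}$ together with $\widehat\E[W_\infty\mid\mathcal{G}]\le S$ one gets $\widehat\E[W_\infty^{q}\mid\mathcal{G}]\le S^{q}$, hence $\widehat\E[W_\infty^{q}]\le\widehat\E[S^{q}]$; conversely, bounding $W_\infty$ from below by the contribution of a single subtree (equivalently, using that for i.i.d.\ nonnegative not-a.s.-zero $V_\ell$ of finite mean independent of $(b_\ell)$ the quantities $\widehat\E[(\sum_\ell b_\ell V_\ell)^{q}]$ and $\widehat\E[(\sum_\ell b_\ell)^{q}]$ are finite together) shows $\widehat\E[W_\infty^{q}]<\infty\iff\widehat\E[S^{q}]<\infty$; one also checks separately that \eqref{eqn:NSClp} forces $W$ to be uniformly integrable, since $\widehat\E\,e^{-q\mathcal{X}_1}<1$ forces $\mathcal{X}_t\to+\infty$ and $\log y/A(\log y)\le\log y=O(y^{q})$ forces the integral in \eqref{eqn:NSCuniformIntegrability} to converge. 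Finally I would apply Theorem~\ref{moments} to the Lévy-type perpetuity $S$ with exponent $q$ (the degenerate case of trivial $\Lambda_2$, in which the process reduces to a single Lévy particle, being immediate): $\widehat\E[S^{q}]<\infty$ iff $\widehat\E\,e^{-q\mathcal{X}_1}<1$ and $\int_{\R\setminus[-1,1]}|y|^{q}\Lambda_2(\dd y)<\infty$. The first condition is rewritten via $\widehat\E\,e^{-q\mathcal{X}_1}=e^{-q\kappa(\theta)}\,\widehat\E\,e^{q\theta\xi_1}=e^{-q\kappa(\theta)}e^{\kappa(\theta+q\theta)-\kappa(\theta)}=e^{\kappa(p\theta)-p\kappa(\theta)}$, so it is equivalent to $\kappa(p\theta)<p\kappa(\theta)$; the second, after unwinding $\Lambda_2$ and using $\Lambda_2(\{|y|>1\})<\infty$, is precisely the second condition in \eqref{eqn:NSClp}.

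\textbf{Main obstacle.} The step I expect to require the most care is the spinal decomposition itself in the branching Lévy setting --- in particular, making rigorous the identity $W_\infty=\sum_\ell b_\ell V_\ell$ under $\widehat\P$, with the $V_\ell$ i.i.d.\ and independent of the spine $\sigma$-field, and verifying that $(\mathcal{X},Z)$ genuinely falls within the framework of Section~\ref{defi} so that Proposition~\ref{maller} and Theorem~\ref{moments} apply verbatim. The remaining ingredients --- the equivalences between $L_p$-convergence, boundedness of $\E_\P[W_t^p]$ and finiteness of $\E_\P[W_\infty^p]$; the characterisation of uniform integrability via $\widehat\P$; and the concavity and Kolmogorov-type comparisons between $\widehat\E[(\sum_\ell b_\ell V_\ell)^{q}]$ and $\widehat\E[(\sum_\ell b_\ell)^{q}]$ --- are routine, though they should be stated carefully.
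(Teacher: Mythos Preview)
Your proposal is correct and follows the same overall architecture as the paper: spinal decomposition under the size-biased measure $\hat{\P}$, identification of the Lévy-type perpetuity $S$ built from the pair $(\mathcal{X},Z)$, and application of Proposition~\ref{maller} for part (i) and Theorem~\ref{moments} for part (ii). Your treatment of (i) is essentially that of Lemma~\ref{lem:unifIntegrability}; the paper, however, works with the finite-time conditional expectation $W^*_t=\hat{\E}[W_t\mid\mathcal{G}]=e^{-\mathcal{X}_t}+S_t$ rather than passing immediately to the representation $W_\infty=\sum_\ell b_\ell V_\ell$, and it upgrades $\liminf_t W_t<\infty$ to $\limsup_t W_t<\infty$ via the $\hat{\P}$-supermartingale property of $1/W$. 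The paper's substitute for your ``Kolmogorov-type fact'' in the converse is a Bernoulli-thinning argument: replacing each $V_\ell$ by $\1_{\{V_\ell\ge 1\}}$ and observing that the ratio of the thinned sum to $S_s$ has constant $\hat{\E}$-mean $\delta>0$, which precludes $S_s\to\infty$.

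For (ii) the two arguments diverge more noticeably. The paper does \emph{not} work directly with $W_\infty$ and the infinite-horizon perpetuity $S$. Instead it first quotes the known discrete-time criterion (from \cite{Alsmeyer+Kuhlbusch:2010,Iksanov:2004,Liu:2000}) that $(W_n)_{n\ge 0}$ is $L_p$-bounded iff $\kappa(p\theta)<p\kappa(\theta)$ and $\E W_1^p<\infty$, and then proves in Lemma~\ref{aux2} that $\E W_1^p=\hat{\E}W_1^{p-1}$ is finite iff $\hat{\E}S_1^{p-1}<\infty$, using the same concavity comparison you describe but applied at the fixed time $t=1$; Theorem~\ref{moments} (through its discrete-time embedding via Proposition~\ref{air}) then links $\hat{\E}S_1^{p-1}<\infty$ to \eqref{eqn:NSClp}. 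Your route---comparing $\hat{\E}[W_\infty^{p-1}]$ directly with $\hat{\E}[S^{p-1}]$ and invoking Theorem~\ref{moments} on $S$ itself---is more self-contained and avoids importing the branching-random-walk result, but, as you rightly flag as the main obstacle, it requires the identity $W_\infty=\sum_\ell b_\ell V_\ell$ under $\hat{\P}$ to hold as an honest equality of $[0,\infty]$-valued random variables, with the i.i.d.\ structure of the $V_\ell$ intact. The paper's detour through $t=1$ sidesteps precisely this point.
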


In \cite[Theorem 1.1]{Bertoin+Mallein:2018b} similar necessary and
sufficient conditions for the uniform integrability of $W$ were
obtained under the additional assumption that $\E \xi_1\in
(-\infty, \infty)$. A new aspect of part (i) of Theorem
\ref{thm:martingale} is that $\E \xi_1$ may be infinite or not
exist. In \cite[Proposition 1.4]{Bertoin+Mallein:2018b} it was
proved that conditions \eqref{eqn:NSClp} entail the
$L_p$-convergence of $W$ under the additional integrability
condition $\kappa(q\theta) < \infty$ for some $q > p$.

Theorem \ref{thm:martingale} will be proved along the lines of the
proof of the corresponding result for branching random walks, see
the introduction for more details. To this end, in the next
section we define a size-biased measure and the corresponding
spinal decomposition. The latter as well as Proposition
\ref{maller} and Theorem \ref{moments} are essential ingredients
for the proof of Theorem \ref{thm:martingale}.

\subsection{Spinal decomposition}

The spinal decomposition is a useful tool to construct the
branching Lévy process under the size-biased law
\[
  \left.\bar{\P}\right|_{\calF_t} :=\left.W_t\P\right|_{\calF_t},\quad t \geq 0,
\]
where $(\mathcal{F}_t)_{t\geq 0}$ is the natural filtration for
$W$. The resulting process is a branching process with the set of
distinguished individuals, called the spine. While the individuals
belonging to the spine produce offspring and displace them
according to a special law, the rest of the population behaves as
in a standard branching Lévy process. This justifies the term
`spinal decomposition'.

To explain the evolution of a branching Lévy
process with spine we need more notation.  Let $\hat{\Pi}$ be a
measure on $\mathcal{P}\times \N$ defined by
\begin{equation}\label{eqn:hatPi}
\hat{\Pi}(\dd \x \dd k) = e^{\theta x_k}\left(\Pi(\dd x)
\mathrm{Count}(\dd k)\right),
\end{equation}
where $\mathrm{Count}$ is the counting measure on $\N$. Set
\[\hat{a} = a + \theta \sigma^2 + \int_{\mathcal{P}}\Big(\sum_{k \geq 1} x_k e^{\theta x_k}\1_{[-1,1]}(x_k) - x_1\1_{[-1,1]}(x_1)\Big)\Pi(\dd \x)
\]
and note that $\hat a$ is well-defined and finite by
\eqref{eqn:levyEve} and \eqref{eqn:finiteExpMoment}. Also, we
denote by $\hat{\mathcal{N}}$ a Poisson random measure on $\R_+
\times \mathcal{P} \times \N$ with mean measure $\mathrm{LEB}
\otimes \hat{\Pi}$ and by $(\hat{B}_t)_{t\geq 0}$ a Brownian
motion which is independent of $\hat{\mathcal{N}}$.

Now we define the spine process
$\hat{\xi}=(\hat{\xi}_t)_{t\geq 0}$ by the following
Lévy-It\^{o} decomposition: for $t\geq 0$
\begin{align*}
\hat{\xi}_t := &\sigma \hat{B}_t + \hat{a} t +
\int_{[0,\,t]\times \mathcal{P}\times \N} x_k
\1_{[-1,1]}(x_k)\hat{\mathcal{N}}^{(c)}(\dd s\dd \x\dd k)\\&+
\int_{[0,\,t]\times \mathcal{P}\times \N} x_k\1_{\R\backslash
[-1,1]}(x_k) \hat{\mathcal{N}}(\dd s\dd \x\dd k).
\end{align*}
Plainly, $\hat{\xi}$ is a Lévy process with characteristic
triplet $(\sigma^2, \hat{a}, \Lambda_1)$, where
the Lévy measure is given by
\begin{equation}\label{la1}
\int_\mr f(-x) \Lambda_1(\dd x) = \int_{\mathcal{P}}\Big(\sum_{k
\geq 1} e^{\theta x_k} f(x_k)\Big) \Pi(\dd \x).
\end{equation}
Further, it can be checked that the Lévy-Khinchine exponent of
$\hat{\xi}$ is $\Psi$ defined in \eqref{eqn:defPsi}.

We are now ready to discuss briefly the evolution
of a branching Lévy process with spine. The spine particle
displaces according to the Lévy process $\hat{\xi}$, and for
each atom $(t,\x,k)$ of $\hat{\mathcal{N}}$, the spine particle
produces offspring at positions $\hat{\xi}_{t-} + x_j$ for all
$j\neq k$. Each of these newborn particles then immediately starts
an independent branching Lévy process from their birth place
and time. Retaining the notation $\mathcal{N}_t$ and $X_s(u)$ (see
Section \ref{def1}) for the branching Lévy process with spine
we shall also write $w_t$ for the label at time $t$ of the spine
particle. With these at hand we denote by $\hat{\P}$ the law of
$((X_t(u))_{u\in\mathcal{N}_t, t\geq 0}, (\mathcal{N}_t)_{t\geq
0}, (w_t)_{t\geq 0})$.

Denote by $(\mathcal{H}_t)_{t\geq 0}$ the filtration associated to
$(X_t(u))_{u \in \mathcal{N}_t, t \geq 0}$ for the branching
Lévy process with spine which excludes the information
concerning the labels of the spine individuals.

\begin{lemma}
We have $\bar{\P}_{|\mathcal{H}_t} = \hat{\P}_{|\mathcal{H}_t}$
for $t\geq 0$ and
\[
  \hat{\P}\{w_t=u|\mathcal{H}_t\} = \frac{e^{\theta X_t(u) - t
  \kappa(\theta)}}{W_t},\quad t\geq 0.
\]
Furthermore, under $\hat{\P}$, $(X_t(w_t))_{t\geq 0}$ is a
Lévy process with Lévy-Khinchine exponent $\Psi$.
\end{lemma}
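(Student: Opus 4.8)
The plan is to derive all three assertions from a single spine change-of-measure identity, obtained by decomposing the martingale weight $W_t$ along ancestral lines. I work on the enriched space carrying both the population $(X_s(u))_{u\in\mathcal{N}_s,\,s\ge 0}$ and the spine labels $(w_s)_{s\ge 0}$, and the claim from which everything follows is: for every $t\ge 0$, every label $u$, and every bounded $\mathcal{H}_t$-measurable functional $G$ of the population,
\begin{equation}
  \hat\E\big[G\,\ind{w_t = u}\big] = \E\big[e^{\theta X_t(u) - t\kappa(\theta)}\,G\big]. \tag{$\star$}
\end{equation}

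To prove $(\star)$ I would unfold the right-hand side with the branching property. Under $\P$, on the event $u\in\mathcal{N}_t$, the population splits into the ancestral trajectory $(X_s(u))_{0\le s\le t}$ --- which runs as the L\'evy process $X(\oslash)$ of \eqref{ito}, punctuated at each atom $(t_k,\x^{(k)})$ of $\mathcal{N}$ on $u$'s line by a branching event in which $u$'s line follows one of the branches indexed by $k\in\N$ (the parent continuation being $k=1$) --- and, for each such event, an independent branching L\'evy process launched from every side branch $j\neq k$. The weight $e^{\theta X_t(u)}$ depends only on the ancestral trajectory, so summing over $u\in\mathcal{N}_t$ and carrying this weight onto the line is precisely an Esscher transform: the factor $e^{\theta x}$ tilts the L\'evy--It\^o data of $X(\oslash)$ from $(\sigma^2,a,\Pi_1)$ to $(\sigma^2,\hat a,\Lambda_1)$, which is the triplet of $\hat\xi$; the factor $\sum_k e^{\theta x_k}$ tilts the branching intensity along the line from $\Pi(\dd\x)$ to $\big(\sum_k e^{\theta x_k}\big)\Pi(\dd\x)$ together with a choice of which branch stays on the line made with probability proportional to $e^{\theta x_k}$, i.e.\ to $\hat\Pi(\dd\x\,\dd k)$; and the remaining normalisation is absorbed by $e^{-t\kappa(\theta)}$ by \eqref{eqn:cumulant}, while the side subtrees remain i.i.d.\ copies of the branching L\'evy process under $\P$. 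Comparing with the construction of $\hat\P$ in Section \ref{def1}, the distinguished line obtained this way is exactly the spine, which at time $t$ bears label $u$; this is $(\star)$. The cleanest rigorous route is to prove $(\star)$ first without the side subtrees --- this is the many-to-one formula \cite[Lemma 2.2]{Bertoin+Mallein:2018b}, $\E\big[\sum_{u\in\mathcal{N}_t}e^{\theta X_t(u)-t\kappa(\theta)}g((X_s(u))_{s\le t})\big]=\E\big[g((\hat\xi_s)_{s\le t})\big]$ --- and then graft the subtrees back via the branching property and Fubini; alternatively one may obtain it by approximating the branching L\'evy process by branching random walks, or by induction over the branching events on the line in $[0,t]$.

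Granting $(\star)$, the three statements are immediate. Summing over $u\in\mathcal{N}_t$ gives $\hat\E[G]=\E[W_tG]=\bar\E[G]$ for all bounded $\mathcal{H}_t$-measurable $G$, that is $\bar\P_{|\mathcal{H}_t}=\hat\P_{|\mathcal{H}_t}$. Substituting this back into $(\star)$,
\[
  \hat\E\big[G\,\ind{w_t=u}\big] = \E\Big[W_t\,\frac{e^{\theta X_t(u)-t\kappa(\theta)}}{W_t}\,G\Big] = \bar\E\Big[\frac{e^{\theta X_t(u)-t\kappa(\theta)}}{W_t}\,G\Big] = \hat\E\Big[\frac{e^{\theta X_t(u)-t\kappa(\theta)}}{W_t}\,G\Big],
\]
and since the last integrand is $\mathcal{H}_t$-measurable this identifies $\hat\P\{w_t=u\mid\mathcal{H}_t\}=e^{\theta X_t(u)-t\kappa(\theta)}/W_t$. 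Finally $(X_t(w_t))_{t\ge 0}=\hat\xi$ by construction, and $\hat\xi$ is a L\'evy process with L\'evy--Khinchine exponent $\Psi$: this is checked by inserting $\hat a$ and the L\'evy measure \eqref{la1} into the L\'evy--Khinchine formula and matching it term by term with $\kappa(\theta+\mathrm{i}s)-\kappa(\theta)$, as already noted just above the lemma.

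I expect the main obstacle to be the rigorous justification of the Esscher/Girsanov step behind $(\star)$: controlling the a priori infinite sum over $\mathcal{N}_t$, the exponential tilt of the ancestral L\'evy--It\^o decomposition, and the reweighting of the branching point process well enough to apply Fubini. Hypotheses \eqref{eqn:levyEve} and \eqref{eqn:finiteExpMoment} are precisely what keep $\hat a$, $\hat\Pi$ and $\kappa(\theta)$ finite and make these manipulations legitimate; once the many-to-one identity \cite[Lemma 2.2]{Bertoin+Mallein:2018b} is available, the remaining bookkeeping is routine.
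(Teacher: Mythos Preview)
The paper does not actually give a proof of this lemma; it states the result and then, in the paragraph that follows, surveys the history of spinal decompositions and points to \cite[Theorem 5.2]{Shi+Watson:2017} and \cite[Lemma 2.3]{Bertoin+Mallein:2018b} for a proof, the latter reference deducing the branching L\'evy case from Lyons' branching random walk spinal decomposition by a reduction-to-discrete-time argument.

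Your proposal instead sketches a direct continuous-time proof: establish the single spine identity $(\star)$ by performing an Esscher tilt on the ancestral L\'evy--It\^o data and the branching point process, and then read off all three assertions. Your derivation of the three claims from $(\star)$ is correct and is the standard way to unpack a spine identity. The contrast is that the route the paper cites is technically lighter --- Lyons' discrete-time result is off the shelf, and one only has to check consistency of the spine construction under the embedded branching random walk at integer times --- whereas your direct route explains intrinsically why the spine has exponent $\Psi$ but requires the simultaneous control of the Brownian tilt, the compensated small-jump tilt, and the reweighting of the (possibly non-$\sigma$-finite in naive coordinates) branching Poisson point process together with the infinite superposition of side subtrees. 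That is real work, essentially the content of \cite[Theorem 5.2]{Shi+Watson:2017} in the case $\Pi(\{x_1>0\})=0$; you correctly flag it as the main obstacle, and you also mention the discrete-time approximation as an alternative, which is exactly the shortcut the paper's reference \cite[Lemma 2.3]{Bertoin+Mallein:2018b} takes.
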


The spinal decomposition was introduced in
\cite{Lyons+Pemantle+Peres:1995} in the context of Galton-Watson
processes. Lyons \cite{Lyons:1997} then proved a spinal
decomposition result for branching random walks. This result was
further generalized to branching Markov chains and general
associated harmonic functions in \cite{Biggins+Kyprianou:2004}, to
general Markov processes and multiple spines in
\cite{Harris+Roberts:14}, etc.
In the context of growth-fragmentation processes a proof of the
spinal decomposition appeared in
\cite{Bertoin+Budd+Curien+Kortchemski:2016} for binary compensated
fragmentations, i.e., under the assumption $\Pi(\{x_1
> 0\}) +\Pi(\{x_3>-\infty\}) = 0$. The first general spinal
decomposition result for branching Lévy processes was obtained
in \cite[Theorem 5.2]{Shi+Watson:2017} under the assumption
$\Pi(\{ x_1 > 0\}) = 0$. A simple argument was given in
\cite[Lemma 2.3]{Bertoin+Mallein:2018b} which enabled one to
deduce the spinal decomposition for branching Lévy processes
from that for branching random walks.

\subsection{Proof of Theorem \ref{thm:martingale}}

We start with some preliminary work. Denote by $\Omega_s$ the
multiset\footnote{I.e., the set of elements counted with their
multiplicity.} of children's positions at time $s$ relative to the
positions of their parents belonging to the spine, i.e.,
\[
  \Omega_s =
  \begin{cases}
    \oslash, & \text{if } \hat{\mathcal{N}}(\{s\} \times \mathcal{P} \times \N) = 0\\
    \{(x_j)_{j\neq k}\}, & \text{if } \hat{\mathcal{N}}(\{(s,\x,k\}) = 1.
  \end{cases}
\]
Setting
\[
S_t := \sum_{0 \leq s \leq t} e^{\theta
X_{s-}(w_{s-})-s\kappa(\theta)} \sum_{z \in \Omega_s} e^{\theta
z}, \quad t\geq 0
\]
we note that the $\hat{\P}$-\text{a.s.} limit
$\lim_{t\to\infty} S_t$, provided it is finite, is a Lévy-type
perpetuity (see \eqref{perp}) in which the role of $X$ is played
by $(-\theta X_t(w_t)+t\kappa(\theta))_{t\geq 0}$ under
$\hat{\P}$, and the associated Lévy measures $\Lambda_1$
and~$\Lambda_2$ are given, respectively, by
\eqref{la1} and
\begin{equation*}
\int_{\mr_+} f(x) \Lambda_2(\dd x) = \int_{\mathcal{P}} \sum_{k
\geq 1} e^{\theta x_k} f\left( \sum_{j \neq k} e^{\theta x_j}
\right) \Pi(\dd \x).
\end{equation*}
It can be checked that assumptions
\eqref{eqn:levyEve} and \eqref{eqn:finiteExpMoment} guarantee that
the so defined $\Lambda_1$ and $\Lambda_2$ satisfy \eqref{assump}.

To facilitate a forthcoming application of Proposition
\ref{maller} let us note that the second condition in
\eqref{eqn:NSCuniformIntegrability} is equivalent to
\begin{equation}\label{inter}
\int_{(e,\infty)} \frac{\log y}{A(\log y)} \Lambda_2(\dd y) <
\infty,
\end{equation}
where $A(x) = 1 + \int_1^x \Lambda_1((y,\infty)) \dd y$ for $x\geq
1$ as in Section \ref{defi} but with $\Lambda_1$ as defined above.
As far as an application of Theorem \ref{moments} is concerned
observe that $\kappa(p\theta)< p \kappa(\theta)$ which is the
first condition in \eqref{eqn:NSClp} is equivalent to
\begin{equation}\label{inter200}
\hat{\E}\exp((p-1)(\theta
X_t(w_t)-t\kappa(\theta)))=\exp(\kappa(p\theta)-p\kappa(\theta))<1.
\end{equation}
The latter is the first condition in \eqref{mom11} with $X$ as
defined in the previous paragraph. Further, the second condition
in \eqref{eqn:NSClp} is equivalent to
\begin{equation}\label{inter100}
\int_{(1,\infty)}y^{p-1}\Lambda_2({\rm d}y)<\infty.
\end{equation}
Now we write a basic representation for what follows:
\begin{equation}\label{eqn:conditionalExpectation}
W^*_t := \hat{\E}\left( W_t \middle| \mathcal{G}\right) =
e^{\theta X_t(w_t) - t \kappa(\theta)} + S_t,\quad t\geq 0,
\end{equation}
where $\mathcal{G}$ is the $\sigma$-algebra which contains the
information concerning the trajectory of the spine as well as the
birth place and the birth times of its offspring.

Passing to the proof of Theorem \ref{thm:martingale} we first deal
with the uniform integrability of $W$.
\begin{lemma}\label{lem:unifIntegrability} Under the assumptions of Theorem
\ref{thm:martingale} the martingale $W$ is uniformly integrable
if, and only if, conditions \eqref{eqn:NSCuniformIntegrability}
hold.
\end{lemma}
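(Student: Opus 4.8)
The plan is to run the classical spine argument for Biggins-type martingales, combining the spinal decomposition set up above with Proposition \ref{maller}. The first step is the change-of-measure reduction of \cite{Lyons+Pemantle+Peres:1995, Lyons:1997}: since $(1/W_t)_{t\ge 0}$ is a non-negative c\`{a}dl\`{a}g supermartingale under $\bar\P$, the limit $W_\infty:=\lim_{t\to\infty}W_t$ exists in $(0,\infty]$ $\bar\P$-a.s., and a look at the Lebesgue decomposition of $\bar\P$ with respect to $\P$ on $\calF_\infty$ shows that $W$ is uniformly integrable under $\P$ if and only if $W_\infty<\infty$ $\bar\P$-a.s. As $\bar\P_{|\mathcal{H}_t}=\hat\P_{|\mathcal{H}_t}$ for every $t$ and $W_\infty$ is $\mathcal{H}_\infty$-measurable, this is the same as $W_\infty<\infty$ $\hat\P$-a.s. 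It therefore suffices to show that, under $\hat\P$, $W_\infty<\infty$ a.s.\@ is equivalent to $S_\infty:=\lim_{t\to\infty}S_t<\infty$ a.s. Indeed, granting this: under $\hat\P$ the spine $(X_t(w_t))_{t\ge0}$ is a L\'evy process with exponent $\Psi$, hence $\lim_{t\to\infty}S_t$ (whenever finite) is a L\'evy-type perpetuity driven by $(-\theta X_t(w_t)+t\kappa(\theta))_{t\ge0}$ and the measures $\Lambda_1,\Lambda_2$ displayed above, and Proposition \ref{maller} says precisely that $S_\infty<\infty$ $\hat\P$-a.s.\@ is equivalent to the conjunction of $\lim_{t\to\infty}(\theta\xi_t-t\kappa(\theta))=-\infty$ a.s.\@ and \eqref{inter}, i.e.\@ to conditions \eqref{eqn:NSCuniformIntegrability}.

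For the implication $S_\infty<\infty\Rightarrow W_\infty<\infty$ (the ``if'' part): if $S_\infty<\infty$ $\hat\P$-a.s.\@ then Proposition \ref{maller} forces $\theta X_t(w_t)-t\kappa(\theta)\to-\infty$ $\hat\P$-a.s., whence $e^{\theta X_t(w_t)-t\kappa(\theta)}\to0$ and, by \eqref{eqn:conditionalExpectation}, $W^*_t=\hat\E(W_t\mid\mathcal{G})\to S_\infty<\infty$ $\hat\P$-a.s. The conditional Fatou lemma then gives $\hat\E(\liminf_t W_t\mid\mathcal{G})\le\liminf_t\hat\E(W_t\mid\mathcal{G})=\lim_t W^*_t=S_\infty<\infty$ $\hat\P$-a.s., so $\liminf_t W_t<\infty$ $\hat\P$-a.s.; since $\lim_t W_t$ exists in $(0,\infty]$ $\hat\P$-a.s., we get $W_\infty<\infty$ $\hat\P$-a.s.

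For the implication $W_\infty<\infty\Rightarrow S_\infty<\infty$ (the ``only if'' part): we use the branching structure under $\hat\P$. Conditionally on $\mathcal{G}$,
\[
W_t=e^{\theta X_t(w_t)-t\kappa(\theta)}+\sum_{0\le s\le t}e^{\theta X_{s-}(w_{s-})-s\kappa(\theta)}\sum_{z\in\Omega_s}e^{\theta z}\,W^{(s,z)}_{t-s},
\]
where the $W^{(s,z)}$, one for each off-spine child born at time $s$ at relative position $z\in\Omega_s$, are i.i.d.\@ copies of $W$ independent of $\mathcal{G}$. Letting $t\to\infty$ and applying Fatou's lemma over the (countably many) children,
\[
W_\infty\ \ge\ \sum_{s\ge 0}e^{\theta X_{s-}(w_{s-})-s\kappa(\theta)}\sum_{z\in\Omega_s}e^{\theta z}\,W^{(s,z)}_\infty,
\]
and the coefficients here -- the terms obtained after deleting the factors $W^{(s,z)}_\infty$ -- add up exactly to $S_\infty$. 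Since $W_\infty<\infty$ $\hat\P$-a.s.\@ means that $W$ is uniformly integrable, we have $\P\{W^{(s,z)}_\infty>0\}>0$, and a conditional three-series argument shows that on the $\mathcal{G}$-measurable event $\{S_\infty=\infty\}$ the right-hand side of the last display is $+\infty$ $\hat\P$-a.s. Comparison with $W_\infty<\infty$ $\hat\P$-a.s.\@ forces $\hat\P\{S_\infty=\infty\}=0$, as needed.

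The routine ingredients are the identification of the spine with $\xi$, the conditional Fatou step, and the reformulation of the a.s.\@ finiteness of a L\'evy-type perpetuity via Proposition \ref{maller} and the equivalence \eqref{inter}. I expect the main obstacle to be the final step of the ``only if'' direction, namely that divergence of the perpetuity $S_\infty$ propagates to $W_\infty=\infty$: this genuinely relies on $W$ being uniformly integrable (so that the i.i.d.\@ multipliers $W^{(s,z)}_\infty$ are strictly positive with positive probability) together with a three-series-type estimate ruling out that only finitely many of the children's subtrees contribute to the sum.
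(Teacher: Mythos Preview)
Your proposal is correct and follows essentially the same spine-plus-perpetuity approach as the paper: the reduction via $\bar\P$/$\hat\P$ to $\hat\P$-a.s.\ finiteness of $W_\infty$, the conditional Fatou step through $W^*_t$ for the ``if'' direction, and the lower bound $W_\infty\ge\sum_{s,z} a_{s,z}W^{(s,z)}_\infty$ for the ``only if'' direction all match the paper's argument. The only stylistic difference is in the last step: where you invoke a conditional three-series argument to show that $\sum_{s,z} a_{s,z}W^{(s,z)}_\infty=\infty$ on $\{S_\infty=\infty\}$, the paper instead truncates $W^{(r,z)}_\infty$ to the Bernoulli variables $e^{(r,z)}=\1_{[1,\infty)}(W^{(r,z)}_\infty)$, forms $\Gamma_s=\sum_{r\le s,z}a_{r,z}e^{(r,z)}$, and derives a contradiction from $\hat\E(\Gamma_s/S_s)=\delta>0$ while $\Gamma_s/S_s\to0$---the two arguments are equivalent in content.
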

\begin{proof}
We use the classical observation (see, for instance, p.~220 in
\cite{Lyons:1997}) that
\begin{equation}\label{equi}
W ~\text{is uniformly integrable under } \P \iff \bar{W}_\infty
:=\limsup_{t \to \infty} W_t <\infty \quad \hat{\P}-\text{a.s.}
\end{equation}
Therefore, it is enough to prove that conditions
\eqref{eqn:NSCuniformIntegrability} are equivalent to the
$\hat{\P}$-a.s.\@ finiteness of $\bar{W}_\infty$.

Assume that conditions \eqref{eqn:NSCuniformIntegrability} hold.
Since the law of the Lévy process $(\xi_t)_{t\geq 0}$ is the
same as the $\hat{\P}$-law of $(X_t(w_t))_{t\geq 0}$, the first
condition in \eqref{eqn:NSCuniformIntegrability} ensures that
\begin{equation}\label{inter3}
\lim_{t\to\infty}(\theta X_t(w_t) - t \kappa(\theta))=
-\infty\quad \hat{\P}-\text{a.s.}
\end{equation}
This entails
$\lim_{t \to \infty} W^*_t = \lim_{t \to \infty} S_t$ $\hat{\P}$-a.s. With \eqref{inter} and \eqref{inter3} at hand, an application of
Proposition \ref{maller} (recall our specific choice of $X$)
yields $\lim_{t \to \infty} S_t<\infty$ $\hat{\P}$-a.s.\@ and thereupon
$\lim_{t \to \infty} W^*_t<\infty$ $\hat{\P}$-a.s. Invoking the
conditional Fatou lemma we further infer
\begin{equation}\label{inter2}
{\lim\inf}_{t\to\infty}W_t<\infty\quad \hat{\P}-\text{a.s.}
\end{equation}
According to Proposition 2 in \cite{Harris+Roberts:2009}, $1/W$ is
a positive supermartingale under $\hat{\P}$. Thus, $1/W_t$
converges $\hat{\P}$-a.s.\@ as $t\to\infty$. In view of
\eqref{inter2} the limit cannot be zero.
Therefore, $\bar{W}_\infty<\infty$
$\hat{\P}$-a.s.\@ which is equivalent to the uniform integrability
of $W$.

Conversely, assume that $W$ is uniformly integrable or
equivalently $\bar{W}_\infty<\infty$ $\hat{\P}$-a.s. Then
\[
  W_t\geq \sum_{u \in \mathcal{N}_t} e^{\theta X_t(u) - t \kappa(\theta)} \geq e^{\theta X_t(w_t) - t
  \kappa(\theta)},\quad t\geq 0
\]
entails $\limsup_{t \to \infty}(\theta X_t(w_t) - t
\kappa(\theta)) < \infty$ $\hat{\P}$-a.s, whence
$\lim_{t \to \infty}(\theta X_t(w_t) - t \kappa(\theta))=-\infty$
$\hat{\P}$-a.s. This proves that the first condition in
\eqref{eqn:NSCuniformIntegrability} holds.

Passing to the proof of the second condition in
\eqref{eqn:NSCuniformIntegrability} we first observe that, for all
$0 \leq s \leq t$,
\[
  W_t \geq \sum_{0 \leq r \leq s} e^{\theta X_{r-}(w_r) - r \kappa(\theta)} \sum_{z \in \Omega_r} e^{\theta z} W_t^{(r,z)}\quad
\hat{\P}-\text{a.s.},
\]
where the random variables $W_t^{(r,z)}:= \sum_{u
\in \mathcal{N}_t} e^{\theta (X_t(u) - X_r(u)) - (t-r)
\kappa(\theta)} \ind{u \text{ descendant of } z}$
are independent of $\mathcal{G}$ and have the same
$\hat{\P}$-distribution as the $\P$-distribution of $W_{t-r}$.
Letting now $t\to\infty$ we infer, for all $s\geq 0$,
\begin{equation}\label{eqn:boundedness}
\bar{W}_\infty \geq \sum_{r \leq s}  e^{\theta X_{r-}(w_r) - r
\kappa(\theta)} \sum_{z \in \Omega_r} e^{\theta z}
W^{(r,z)}_\infty\quad \hat{\P}-\text{a.s.},
\end{equation}
where $W_\infty^{(r,z)}$ is the limit of the
Biggins martingale associated to the descendant of the spine born
at time $r$ at position $z$.

The random variables $(W_\infty^{(r,z)})_{r\geq 0,
z \in \Omega_r}$ are i.i.d. In view of the assumption
$\bar{W}_\infty<\infty$ $\hat{\P}$-a.s.\ equivalence \eqref{equi}
ensures $\E W^{(r,z)}_\infty= 1$. As a consequence, there exists
$\delta > 0$ such that $\P\{W^{(r,z)}_\infty \geq 1\} = \delta$.
Setting $e^{(r,z)} = \1_{[1,\infty)}(W^{(r,z)}_\infty)$ we
conclude that the random variables $(e^{(r,z)})_{r\geq 0, z \in
\Omega_r}$ are independent Bernoulli random variables with
parameter $\delta$. Now \eqref{eqn:boundedness} implies that, for
all $s\geq 0$,
\[
  \bar{W}_\infty \geq \sum_{r \leq s}  e^{\theta X_{r-}(w_r) - r \kappa(\theta)} \sum_{z \in \Omega_r} e^{\theta z} e^{(r,z)}=:\Gamma_s\quad \hat{\P}-\text{a.s.}
\]
In particular, there exists a sequence $(s_j)$
such that $\lim_{j\to\infty}\Gamma_{s_j}<\infty$ $\hat{\P}$-a.s.

Assume now that $\lim_{t\to\infty} S_t=\infty$
$\hat{\P}$-a.s., so that
$\lim_{j\to\infty}(\Gamma_{s_j}/S_{s_j})=0$ $\hat{\P}$-a.s. Since
$\Gamma_{s_j}/S_{s_j}\leq 1$ $\hat{\P}$-a.s.\,
$\Gamma_{s_j}/S_{s_j}$ must converge to $0$ in $\hat{\P}$- mean.
However, this is not the case, for
$\hat{\E}(\Gamma_{s_j}/S_{s_j})=\delta$, a contradiction. Thus, we
have shown that $\lim_{t\to\infty} S_t<\infty$ $\hat{\P}$-a.s. By
Proposition \ref{maller} this implies that the second condition in
\eqref{eqn:NSCuniformIntegrability} holds. The proof of Lemma~\ref{lem:unifIntegrability} is complete.
\end{proof}

The proof of the second part of Theorem \ref{thm:martingale}
follows by a similar reasoning. We first use the fact that
$(W_n)_{n\in\mn_0}$ is the Biggins martingale of a branching
random walk with the underlying point process $\sum_{u \in
\mathcal{N}_1} \varepsilon_{X_1(u)}$. The following result is
well-known and can be found in Theorem 3.1 of
\cite{Alsmeyer+Kuhlbusch:2010}, Corollary 5 of
\cite{Iksanov:2004}, Theorem 2.1 of \cite{Liu:2000} and
perhaps some other articles:
the $L_p$-convergence of $(W_n)_{n\in\mn_0}$ for $p>1$ is
equivalent to the following two conditions
\begin{equation}\label{eqn:NSCBiggins}
\kappa(p\theta)< p \kappa(\theta) \quad \text{and} \quad \E W_1^p
< \infty.
\end{equation}
Another form of the left-hand inequality is given by the first
inequality in
\[1>\E\sum_{u\in\mathcal{N}_1}e^{p(\theta
X_1(u)-\kappa(\theta))}=e^{\kappa(p\theta)-p\kappa(\theta)}.\]
As the $L_p$-convergence of $W$ is obviously equivalent to that of
$(W_n)_{n\in\mn_0}$, it only remains to check that conditions
\eqref{eqn:NSClp} and \eqref{eqn:NSCBiggins} are equivalent.
\begin{lemma}\label{aux2}
Let $p\in (1,2]$. Assume \eqref{eqn:levyEve} and
\eqref{eqn:finiteExpMoment} hold and that $\kappa(p\theta) < p
\kappa(\theta)$. Then
\[
  \E W_1^p < \infty \iff \int_{\mathcal{P}}\sum_{k\geq 1}e^{\theta x_k}\Big(\sum_{j\neq
k}e^{\theta x_j}\Big)^{p-1}
 \1_{(e,\infty)}\Big(\sum_{j\neq k}e^{\theta x_j}\Big)\Pi(\dd
\x)<\infty.
\]
\end{lemma}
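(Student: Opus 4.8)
The plan is to connect $\E W_1^p$ with the $p$-th moment of the Lévy-type perpetuity $S=\lim_{t\to\infty}S_t$ constructed from the spinal decomposition, and then invoke Theorem \ref{moments}. First I would use the relation \eqref{eqn:conditionalExpectation}, namely $W_t^*=\hat\E(W_t\mid\mathcal G)=e^{\theta X_t(w_t)-t\kappa(\theta)}+S_t$, together with the spine change of measure $\bar\P|_{\calF_t}=W_t\,\P|_{\calF_t}$. Since $\E W_1^p=\E(W_1\cdot W_1^{p-1})=\bar\E(W_1^{p-1})=\hat\E(W_1^{p-1})$, and by conditional Jensen ($p-1\le 1$, so $x\mapsto x^{p-1}$ is concave) one has $\hat\E(W_1^{p-1})\le\hat\E\big((W_1^*)^{p-1}\big)$, while in the other direction one needs a reverse bound. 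Rather than chasing the gap at time $1$, the cleaner route is: $\E W_1^p<\infty$ is equivalent to $L_p$-boundedness of $(W_n)$, hence (using \eqref{equi}-type spine arguments exactly as in Lemma \ref{lem:unifIntegrability}, or directly from the martingale property) equivalent to $\hat\E(\bar W_\infty^{p-1})<\infty$; and since $\bar W_\infty$ and $W_\infty^*:=\lim W_t^*$ differ only through the a.s.\ finite factor coming from the descendant martingales $W_\infty^{(r,z)}$ which have finite $p$-th moments, this is in turn equivalent to $\hat\E\big((W_\infty^*)^{p-1}\big)<\infty$, i.e.\ to $\hat\E\big((e^{\theta X_t(w_t)-t\kappa(\theta)}\to 0)+S\big)^{p-1}<\infty$, i.e.\ to $\hat\E|S|^{p-1}<\infty$.

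Next I would apply Theorem \ref{moments} to the perpetuity $S$ with the specific data identified in the preliminary discussion: $X$ replaced by $(-\theta X_t(w_t)+t\kappa(\theta))_{t\ge 0}$ under $\hat\P$, and $\Lambda_1,\Lambda_2$ as displayed there. With exponent $p-1>0$ in place of $p$, Theorem \ref{moments} says $\hat\E|S|^{p-1}<\infty$ iff $\hat\E\exp\big((p-1)(\theta X_1(w_1)-\kappa(\theta))\big)<1$ and $\int_{\R\setminus[-1,1]}|y|^{p-1}\Lambda_2(\dd y)<\infty$. The first of these is exactly \eqref{inter200}, which the hypothesis $\kappa(p\theta)<p\kappa(\theta)$ supplies. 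The second, by the definition of $\Lambda_2$, reads $\int_{(1,\infty)}y^{p-1}\Lambda_2(\dd y)<\infty$, which is \eqref{inter100}, and unwinding $\Lambda_2$ through $\int_{\R_+}f(x)\Lambda_2(\dd x)=\int_{\mathcal P}\sum_{k\ge1}e^{\theta x_k}f\big(\sum_{j\ne k}e^{\theta x_j}\big)\Pi(\dd\x)$ gives precisely the integral condition in the statement (the indicator $\1_{(e,\infty)}$ versus $\1_{(1,\infty)}$ being immaterial since on $(1,e]$ the integrand $\sum_j e^{\theta x_j}\cdot(\sum_{j\ne k}e^{\theta x_j})^{p-1}$ is bounded and integrated against a finite measure). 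One also must check that $\Lambda_2$ is nontrivial (else the branching is trivial and both sides hold vacuously), and that the nondegeneracy hypotheses of Proposition \ref{air} behind Theorem \ref{moments} are met, which follows as in the proof of Theorem \ref{moments} from the fact that $Z$ has no Brownian part.

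The main obstacle I expect is the reduction $\E W_1^p<\infty\iff\hat\E|S|^{p-1}<\infty$, specifically controlling the discrepancy between $\bar W_\infty$, $W_\infty^*$ and $S$. Going from $S$ to $W^*$ is easy because $e^{\theta X_t(w_t)-t\kappa(\theta)}\to 0$ under the standing assumption $\kappa(p\theta)<p\kappa(\theta)$ (which forces $\hat\P$-a.s.\ divergence of $\theta X_t(w_t)-t\kappa(\theta)$ to $-\infty$, as noted below \eqref{inter200} and used in Lemma \ref{lem:unifIntegrability}). The delicate direction is recovering an upper bound on $\hat\E(\bar W_\infty^{p-1})$ from $\hat\E((W_\infty^*)^{p-1})$: here one uses the decomposition of $\bar W_\infty$ along the spine into a sum over spine-birth events of terms $e^{\theta X_{r-}(w_r)-r\kappa(\theta)}e^{\theta z}W_\infty^{(r,z)}$, observes that conditionally on $\mathcal G$ these have conditional mean $W_\infty^*$ (since $\E W_\infty^{(r,z)}=1$), applies the Topchii--Vatutin / Burkholder-type $L_{p-1}$ inequality for sums of conditionally-centered-after-rescaling terms together with $\E(W_\infty^{(r,z)})^{p}<\infty$ — which itself holds because the descendant branching Lévy processes satisfy the same hypotheses, closing a short bootstrap — to bound $\hat\E(\bar W_\infty^{p-1}\mid\mathcal G)$ by a constant times $(W_\infty^*)^{p-1}$ plus a remainder controlled by $\sum_r(\cdots)^{p-1}$, which is again dominated using $p-1\le 1$. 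This is the same circle of ideas as in the branching random walk proof referenced before \eqref{eqn:NSCBiggins}; the continuous-time bookkeeping over the Poissonian spine-birth times is the only genuinely new wrinkle, and it is handled by the compensation formula exactly as in the proof of Proposition \ref{behme}.
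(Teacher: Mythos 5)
Your ``$\Leftarrow$'' direction is essentially the paper's argument: conditional Jensen against $\mathcal G$, subadditivity of $x\mapsto x^{p-1}$, and Theorem~\ref{moments} applied to $S$ via \eqref{inter200}, \eqref{inter100}. That part is fine.

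Your ``$\Rightarrow$'' direction, however, has a genuine gap and also a directional confusion. You want to pass from $\hat\E(\bar W_\infty^{p-1})<\infty$ to $\hat\E(S^{p-1})<\infty$, where $S=W_\infty^*=\hat\E(\bar W_\infty\,|\,\mathcal G)$. Conditional Jensen with a \emph{concave} function gives $\hat\E(\bar W_\infty^{p-1}\,|\,\mathcal G)\le (W_\infty^*)^{p-1}$, i.e.\ it bounds $\hat\E(\bar W_\infty^{p-1})$ \emph{from above} by $\hat\E((W_\infty^*)^{p-1})$. So the ``upper bound on $\hat\E(\bar W_\infty^{p-1})$ from $\hat\E((W_\infty^*)^{p-1})$'' that you describe as the delicate step is actually the trivial Jensen step and is the wrong direction for the implication you need. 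The genuinely delicate step is the reverse: from finiteness of $\hat\E(\bar W_\infty^{p-1})$ infer finiteness of $\hat\E(S^{p-1})$, and Jensen gives no help there. Your proposed remedy --- Topchii--Vatutin / Burkholder in $L_{p-1}$ --- does not apply, since $p-1\in(0,1]$ and these moment inequalities are formulated for exponents $\ge 1$; moreover, invoking $\E(W_\infty^{(r,z)})^p<\infty$ inside the argument is circular, because $L_p$-boundedness of the descendant martingales is precisely what Lemma~\ref{aux2} (via \eqref{eqn:NSCBiggins}) is trying to characterize.

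The paper sidesteps all of this by working at time $1$ rather than at $\infty$ and by replacing the descendant martingales with their running infima $\underline{W}^{(s,z)}_1:=\inf_{u\in[0,1]}W^{(s,z)}_u$, which are i.i.d., independent of $\mathcal G$, and positive with positive probability --- the only property that is needed. Writing $W_1\ge\sum_{s,z}a_{s,z}\underline{W}^{(s,z)}_1$ with $\sum a_{s,z}=S_1$ and applying Jensen in the \emph{correct} direction to the convex combination $\sum (a_{s,z}/S_1)\underline{W}^{(s,z)}_1$ yields $\hat\E W_1^{p-1}\ge \hat\E S_1^{p-1}\cdot\hat\E\underline{W}_1^{p-1}$, so $\hat\E S_1^{p-1}<\infty$ with no circularity and no moment inequality in $L_{p-1}$. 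One then bootstraps $\hat\E S_1^{p-1}<\infty$ to $\hat\E S^{p-1}<\infty$ via Proposition~\ref{air} (exactly as in the proof of \eqref{mom11}$\Rightarrow$\eqref{mom12}) and finishes with Theorem~\ref{moments}. You should replace your sketch of the ``$\Rightarrow$'' direction with this argument.
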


\begin{proof}
$\Leftarrow$: We intend to prove that $\E W_1^p < \infty$ or
equivalently $\hat{\E} W_1^{p-1} < \infty$. By Theorem~\ref{moments},
conditions \eqref{inter200} and \eqref{inter100}
ensure that $S:=\lim_{t\to\infty} S_t<\infty$
$\hat{\P}$-\text{a.s.} and that $\hat{\E}S^{p-1}<\infty$.
Recalling \eqref{eqn:conditionalExpectation} we obtain
\[\hat{\E} W_1^{p-1}\leq
\hat{\E}[\hat{\E}(W_1|\mathcal{G})^{p-1}]\leq
\hat{\E}\big(e^{(p-1)(\theta
X_1(w_1)-\kappa(\theta))}+S_1^{p-1}\big)< \infty\]
having used the conditional Jensen inequality for the first inequality,
subadditivity of $x\mapsto x^{p-1}$ on $\R_+$ for the second, and
\eqref{inter200} together with $\hat{\E}S_1^{p-1}\leq
\hat{\E}S^{p-1}$ for the third.

 $\Rightarrow$: For $s>0$ and $z\in \Omega_s$, denote by
$(W^{(s,z)}_u)_{u\geq 0}$ the Biggins martingale associated to the
descendant of the spine born at time $s$ at position $z$. Setting
$\underline{W}^{(s,z)}_1: = \inf_{u \in [0,1]} W^{(s,z)}_u$ we
obtain
\[
W_1 \geq \sum_{0 \leq s \leq 1} e^{\theta X_{s-}(w_{s-}) - s
\kappa(\theta)} \sum_{z \in \Omega_s} e^{\theta z} W^{(s,z)}_{1-s}
\geq \sum_{0 \leq s \leq 1} e^{\theta X_{s-}(w_{s-}) - s
\kappa(\theta)} \sum_{z \in \Omega_s} e^{\theta
z}\underline{W}^{(s,z)}_1\quad \hat{\P}-\text{a.s.}
\]
The random variables $\underline{W}^{(s,z)}_1$ are
$\hat{\P}$-i.i.d., positive with positive probability and
independent of all the other random variables occurring under the
sum. Using concavity of $x\mapsto x^{p-1}$ on $\R_+$ yields
\[
  W_1^{p-1} \geq S_1^{p-1} \times \frac{\sum_{0 \leq s \leq 1, z \in \Omega_s} e^{\theta X_{s-}(w_{s-}) - s
\kappa(\theta)} e^{\theta z}
(\underline{W}^{(s,z)}_1)^{p-1}}{S_1}\quad
\hat{\P}-\text{a.s.}
\]

Denoting by $\underline{W}_1$ a generic copy of $\underline{W}_1^{(s,z)}$, we deduce that
$
\hat{\E} W_1^{p-1}\geq \hat{\E}S_1^{p-1}
\hat{\E}\underline{W}_1^{p-1},
$
thereby showing that $\hat{\E}S_1^{p-1}<\infty$.

Using Proposition
\ref{air} in the same way as in the proof of Theorem
\ref{moments}, implication \eqref{mom11}$\Rightarrow$
\eqref{mom12} we conclude that $\hat{\E}S_1^{p-1}<\infty$ together
with \eqref{inter200} ensure that $\hat{\E}S^{p-1}<\infty$.
Formula \eqref{inter100} then follows by Theorem \ref{moments}.
The proof of Lemma \ref{aux2} is complete.
\end{proof}

\paragraph*{Acknowledgements.} The authors thank the Associate Editor for having detected a serious error in the earlier version of this paper.

\end{document}